\def \A {{\mathcal A}}
\def \P {{\mathcal P}}
\def \I {{\mathcal I}}
\def \R {{\mathbb R}}
\def \N {{\mathbb N}}
\def \C {{\mathbb C}}
\def \H {{\mathbb H}}
\def \hX {\hat{X}}
\def \tX {\tilde{X}}
\def \tx {\tilde{x}}
\def \tLam {\tilde{\Lambda}}
\newcommand{\be}{\begin{equation}}
\newcommand{\ee}{\end{equation}}
\newcommand{\ba}{\begin{eqnarray*}}
\newcommand{\ea}{\end{eqnarray*}}
\newcommand{\bi}{\begin{itemize}}
\newcommand{\ei}{\end{itemize}}
\newcommand{\del}{\mbox{$\delta$}}
\newcommand{\lam}{\mbox{$\lambda$}}
\newcommand{\Lam}{\mbox{$\Lambda$}}
\newcommand{\Sig}{\mbox{$\Sigma$}}
\newtheorem{thm}{Theorem}[section]
\newtheorem{prop}{Proposition}[section]
\newtheorem{lem}{Lemma}[section]
\newtheorem{cor}{Corollary}[section]
\newtheorem{rmk}{Remark}[section]
\newcommand{\comments}[1]{}
\newcommand{\hf}{\frac{1}{2}}
\newcommand{\diag}{\mathrm{diag}}
\newcommand{\Diag}{\mathrm{Diag}}
\newcommand{\rank}{\mathrm{rank}}
\newcommand{\Tr}{\mathrm{Tr}}
\newcommand{\la}{\langle}
\newcommand{\ra}{\rangle}
\begin{document}
\title{PhaseLiftOff: an Accurate and Stable Phase Retrieval Method Based on Difference of Trace and Frobenius Norms}
\author{Penghang Yin
\thanks {Department of Mathematics, UC Irvine, Irvine,
CA 92697, USA, (penghany@uci.edu).}
\and Jack Xin \thanks {Department of Mathematics, UC Irvine, Irvine,
CA 92697, USA, (jxin@math.uci.edu).} }


\date{}
\pagestyle{myheadings}
\maketitle

\begin{abstract}
Phase retrieval aims to recover a signal $x \in \C^{n}$ from its
amplitude measurements $|\la x, a_i \ra |^2$, $i=1,2,\cdots,m$,
where $a_i$'s are over-complete basis vectors, with $m$ at least $3n -2$ to ensure
a unique solution up to a constant phase factor. The quadratic measurement becomes linear
in terms of the rank-one matrix $X = x x^*$. Phase retrieval is then a rank-one
minimization problem subject to linear constraint for which a convex relaxation based on trace-norm minimization
(PhaseLift) has been extensively studied recently. At $m=O(n)$, PhaseLift recovers with high probability
the rank-one solution. In this paper, we present
a precise proxy of rank-one condition via the difference of trace and Frobenius norms which we call
PhaseLiftOff. The associated least squares minimization with this penalty as regularization
is equivalent to the rank-one least squares
problem under a mild condition on the measurement noise.
Stable recovery error estimates are valid at $m=O(n)$ with high probability.
Computation of PhaseLiftOff minimization is carried out by a convergent
difference of convex functions algorithm. In our numerical example, $a_i$'s are Gaussian distributed.
Numerical results show that PhaseLiftOff outperforms PhaseLift and its nonconvex variant (log-determinant regularization), and
successfully recovers signals near the theoretical lower limit on the number of measurements without the noise.
\end{abstract}

\section{Introduction.}
Phase retrieval has been a long standing problem in
imaging sciences such as
X-ray crystallography, electron microscopy, array imaging, optics, signal processing,
\cite{GerSax_72,Fienup_78,Fienup_82,Ha_93,Zou_97} among others.
It concerns with signal recovery when only the amplitude
measurements (say of its Fourier transform) are available.
Major recent advances have been made
for phase retrieval by formulating it as a matrix completion and rank one minimization problem (PhaseLift)
which is relaxed and solved as a convex trace (nuclear) norm minimization problem
under sufficient measurement conditions \cite{CSV,Chai_11,CL,CESV}; see also \cite{BCE,BBCE} for related work.
An alternative viable approach makes use of random masks in
measurements to achieve uniqueness of solution with high probability \cite{Fann_2012,Fann_2013}.

In this paper, we study a nonconvex Lipschitz continuous metric, the difference of trace and Frobenius norms,
and show that its minimization characterizes the rank one solution exactly and serves as
a new tool to solve the phase retrieval problem. We shall see that it is more accurate
than trace norm or the heuristic log-determinant \cite{Faz_02,Faz_03}, and performs the best
when the number of measurements approaches the theoretical lower limit \cite{BCE}. We shall call our method
PhaseLiftOff, where Off is short for subtracting off Frobenius norm from the trace norm
in PhaseLift \cite{CSV,CESV}.

The phase retrieval problem aims to reconstruct an unknown signal $\hat{x}\in\C^n$
satisfying $m$ quadratic constraints
$$
|\la a_{i},\hat{x}\ra|^2 = b_i, \quad i = 1,\dots, m,
$$
where the bracket is inner product, $a_i\in\C^n$ and $b_i\in\R$.
Letting $X = xx^*\in\C^{n\times n}$ be a rank-1 positive semidefinite matrix ($*$ is conjugate transpose),
one can recast quadratic measurements as linear ones about $X$:
$$
|\la a_i,x\ra|^2 = a_i^*Xa_i, \quad i = 1,\dots,m.
$$
Thus we can define a linear operator $\A$ uniquely determined by the measurement matrix $A = (a_1,\dots,a_m)\in\C^{n\times m}$:
\begin{equation*}
\begin{array}{lll}
\H^{n\times n} &\rightarrow &\R^m\\
X & \mapsto & \diag(A^*XA)
\end{array}
\end{equation*}
which maps Hermitian matrices into real-valued vectors. Denote $\hat{x}\hat{x}^*$ by $\hX$, and suppose $b = (b_1,\dots,b_m)^{\mathrm{T}} = \A(\hX)\in\R^m$ is the measurement vector. Then the phase retrieval becomes the feasibility problem, being equivalent to a rank minimization problem:
\begin{equation}\label{rkmin}
\begin{array}{ll}
\mbox{find} &  X\in\C^{n\times n} \\
\mbox{s.t.} & \A(X) = b \\
& X\succeq 0 \\
& \rank(X) = 1. \\
\end{array}
\Leftrightarrow
\begin{array}{ll}
\min_{X\in\C^{n\times n}} & \rank(X)  \\
\mbox{s.t.} & \A(X) = b \\
& X\succeq 0. \\
\end{array}
\end{equation}
To arrive at the original solution $\hat{x}$ to
the phase retrieval problem, one needs to factorize the solution $\hX$ of (\ref{rkmin}) as $\hat{x}\hat{x}^*$. It gives
$\hat{x}$ up to multiplication by a constant scalar with unit modulus (a constant phase factor),
because if $\hat{x}$ solves the phase retrieval problem, so does $c\hat{x}$, for any $c\in\C$ with $|c| = 1$.
At least $3n-2$ intensity measurements are necessary to guarantee uniqueness (up to a constant phase factor)
of the solution to (\ref{rkmin}) \cite{F}, whereas $4n-2$ generic measurements
suffice for uniqueness with probability one \cite{BCE}.

Instead of (\ref{rkmin}), Cand\`{e}s \textit{et al.} \cite{CESV,CSV} suggest solving
the convex PhaseLift problem, namely minimizing the trace norm as a convex surrogate for the rank functional:
$$
\min_{X\in\C^{n\times n}} \; \Tr(X)  \quad \mbox{s.t.} \quad \A(X) = b, \; X\succeq 0.
$$
It is shown in \cite{CL} that if each $a_i$ is Gaussian or uniformly sampled on the sphere,
then with high probability, $m=O(n)$ measurements are sufficient to recover the ground truth $\hX$ via PhaseLift.
For the noisy case, the following variant is considered in \cite{CL}:
$$
\min_{X\in\C^{n\times n}} \; \|\A(X) - b\|_1 \quad \mbox{s.t.} \quad X\succeq 0.
$$
In this case, $b = \A(\hX) + e$ is contaminated by the additive noise $e\in\R^m$. Similarly, $m = O(n)$ measurements guarantee stable recovery in the sense that
the solution $X^{\mathrm{opt}}$ satisfies $\|X^{\mathrm{opt}}-\hX\|_F=O(\frac{\|e\|_1}{m})$
with probability close to 1.
On the computational side, the regularized trace-norm minimization is considered in \cite{CESV,CSV}:
\begin{equation}\label{unl1}
\min_{X\in\C^{n\times n}} \; \hf\|\A(X)-b\|_2^2 +\lam\Tr(X) \quad \mbox{s.t.} \quad X\succeq 0.
\end{equation}
If there is no noise, a tiny value of $\lam$ would work well.
However, when the measurements are noisy,
determining $\lam$ requires extra work, such as employing the cross validation technique.

Besides PhaseLift and its nonconvex variant (log-determinant) proposed in \cite{CESV},
related formulations such as feasibility problem or weak PhaseLift \cite{DH} and PhaseCut \cite{phasecut}
also lead to phase retrieval solutions under certain measurement conditions. PhaseCut is a convex relaxation
where trace minimization is in the form $\min_{U} \, \Tr (U M)$, where $M$ (resp., $U$) is a known (resp., unknown) positive
semidefinite Hermitian matrix, and diag($U$) = 1. The exact recovery (tightness) conditions for
PhaseLift and PhaseCut are studied in \cite{phasecut} and references therein.

From the point of view of energy minimization, the phase retrieval problem is simply:
\be\label{energymin}
\min_{X\in\C^{n\times n}} \|\A(X)-b\|_2^2 \quad \mbox{s.t.} \quad X\succeq 0, \; \rank(X)=1.
\ee
This is a least squares-type model applicable to both noiseless and noisy cases.
Our main contribution in this work is to reformulate the phase retrieval problem (\ref{energymin}) as a nearly equivalent nonconvex optimization problem that can be efficiently solved by the so-called difference of convex functions algorithm (DCA).
Specifically, we propose to solve the following regularization problem:
\begin{equation}\label{unl1l2}
\min_{X\in\C^{n\times n}} \; \varphi(X) := \hf\|\A(X)-b\|_2^2 +\lam(\Tr(X)-\|X\|_F) \quad \mbox{s.t.} \quad X\succeq 0.
\end{equation}
Recently the authors of \cite{ELX,YLHX} have reported that minimizing
the difference of $\ell_1$ and $\ell_2$ norms would promote sparsity when recovering
a sparse vector from linear measurements. The $\ell_1-\ell_2$ minimization
is extremely favorable for the reconstruction of the 1-sparse vector $x$
because $\|x\|_1-\|x\|_2$ attains the possible minimum value zero at such $x$.
Note that when $X\succeq 0$, $\Tr(X)$ is nothing but the $\ell_1$ norm
of the vector $\sigma(X)$ formed by $X$'s singular values and $\|X\|_F$ the $\ell_2$ norm. Thus
(\ref{unl1l2}) is basically the counterpart of $\ell_1-\ell_2$ minimization with
nonnegativity constraint discussed in \cite{ELX}. Similarly, $\Tr(X)-\|X\|_F$ is minimized
when $\sigma(X)$ is 1-sparse or equivalently $\rank(X)=1$.
\medskip

The rest of the paper is organized as follows. After setting notations and giving
preliminaries in Section 2, we establish the equivalence
between (\ref{unl1l2}) and (\ref{energymin}) under mild conditions on $\lam$ and $\|e\|_2$ in Section \ref{theory}.
In particular, the equivalence holds in the absence of noise.
We will see that $\lam$ plays a very different role in (\ref{unl1l2}) from
that in (\ref{unl1}), as we have much more freedom to choose $\lam$ in (\ref{unl1l2}).
We then introduce the DCA method for solving (\ref{unl1l2}) and analyze its convergence
in Section \ref{alg}. The DCA calls for solving a sequence of convex subproblems which we carry out
with the alternating direction method of multipliers (ADMM). As an extension, we tailor our method to the task of retrieving real-valued or nonnegative signals.
In Section \ref{numerical}, we show numerical results
demonstrating the superiority of our method through examples where the columns of $A$
are sampled from Gaussian distribution.
The PhaseLiftOff problem (\ref{unl1l2}) with the $\Tr(X) - \|X\|_F$ regularization produces far
more accurate phase retrieval than either the trace norm or $\log ({\rm det} (X + \varepsilon I))$ ($\varepsilon > 0$).
We also observe that for a full interval of regularization parameters, the DCA produces robust solutions in the presence of noise.
The concluding remarks are given in Section 6.

\medskip

\section{Notations and Preliminaries.}
For any $X,Y\in\C^{n\times n}$, $\la X,Y\ra = \Tr(X^*Y)$ is the inner product for matrices, which is a generalization of that for vectors.
The Frobenius norm of $X$ is $\|X\|_F = \sqrt{\la X, X \ra}$, while $X\circ Y$ denotes the entry-wise product,
namely $(X\circ Y)_{ij} = X_{ij}Y_{ij}$, $\forall i,j$. $\diag(X)\in\C^n$ extracts the diagonal elements of $X$. The spectral norm of $X$ is
$\|X\|_2$, while the nuclear norm of $X$ is $\|X\|_*$. We have the following elementary inequalities:
$$
\|X\|_2\leq\|X\|_F\leq\sqrt{\rank(X)}\|X\|_2,
$$
and
$$
\|X\|_F\leq\|X\|_*\leq\sqrt{\rank(X)}\|X\|_F.
$$

For any vector $x\in\R^m$, $\|x\|_1$ and $\|x\|_2$ are
the $\ell_1$ norm and $\ell_2$ norm respectively, while $\Diag(x)\in\R^{m\times m}$ is
the diagonal matrix with $x$ on its diagonal.

 We assume that $m\geq n$ and that $A$ is of full rank unless otherwise stated, i.e. $\rank(A)=n$. Recall that $\A(X) := \diag(A^*XA)$ is a linear operator from $\H^{n\times n}$ to $\R^m$, then the adjoint operator $\A^*$ is defined as $\A^*(x) := A\Diag(x)A^*\in\H^{n\times n}$ for all $x\in\R^m$.
Furthermore, the norms of $\A$ and $\A^*$ are given by
$$
\|\A\| := \sup_{X\in\H^{n\times n}\setminus\{0\}}\frac{\|\A(X)\|_2}{\|X\|_F}, \quad \|\A^*\| := \sup_{x\in\R^m\setminus\{0\}}\frac{\|\A^*(x)\|_F}{\|x\|_2}.
$$
Since $(\H^{n\times n}, \la \cdot, \cdot\ra)$ and $(\R^m, \la \cdot, \cdot\ra)$ are both Hilbert spaces, we have
\be\label{operator}
\|\A^*\|^2 = \|\A\|^2 = \|\A\A^*\|.
\ee

The following lemma will be frequently used in the proofs.
\begin{lem}\label{fundmental}
Suppose $X, \; Y\in\C^{n\times n}$ and $X, \; Y \succeq0$, then
\bi
\item[\textbf{1.}] $\la X, Y\ra \geq0$.
\item[\textbf{2.}] $\la X,Y\ra = 0 \Leftrightarrow XY = 0$.
\item[\textbf{3.}] $\|\A(X)\|_2=0 \Leftrightarrow X=0$.
\ei
\end{lem}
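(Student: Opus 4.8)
The plan is to prove the three items in order, using spectral decomposition of positive semidefinite matrices as the main tool, since each claim is ultimately a statement about eigenvalues and eigenvectors.

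For item 1, I would write $Y = \sum_{k} \mu_k v_k v_k^*$ with $\mu_k \ge 0$ the eigenvalues of $Y$ and $v_k$ an orthonormal eigenbasis. Then $\la X, Y\ra = \Tr(X^*Y) = \Tr(XY) = \sum_k \mu_k \, v_k^* X v_k$, and since $X \succeq 0$ each term $v_k^* X v_k \ge 0$, so the sum is nonnegative. (Alternatively one can write $X = X^{1/2}X^{1/2}$ and $Y = Y^{1/2}Y^{1/2}$ and use $\la X, Y\ra = \Tr(X^{1/2}Y^{1/2}Y^{1/2}X^{1/2}) = \|Y^{1/2}X^{1/2}\|_F^2 \ge 0$; this reformulation is what makes item 2 clean.)

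For item 2, the direction $XY = 0 \Rightarrow \la X, Y\ra = 0$ is immediate since $\la X, Y\ra = \Tr(XY)$. For the converse, I would use the square-root identity from item 1: $\la X, Y\ra = \|Y^{1/2}X^{1/2}\|_F^2$, so $\la X, Y\ra = 0$ forces $Y^{1/2}X^{1/2} = 0$. Multiplying on the left by $Y^{1/2}$ and on the right by $X^{1/2}$ gives $YX = 0$, hence also $XY = (YX)^* = 0$ using that $X, Y$ are Hermitian. (One must be a little careful that $Y^{1/2}X^{1/2} = 0$ really does give $YX = 0$; going through $0 = Y^{1/2}(Y^{1/2}X^{1/2})X^{1/2} = YX$ handles this.)

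For item 3, the forward direction is the substantive one. By definition $\|\A(X)\|_2 = 0$ means $a_i^* X a_i = 0$ for all $i$. Writing $X = \sum_k \lambda_k u_k u_k^*$ with $\lambda_k \ge 0$, we get $0 = a_i^* X a_i = \sum_k \lambda_k |\la u_k, a_i\ra|^2$, a sum of nonnegative terms, so $\la u_k, a_i\ra = 0$ for every $i$ with $\lambda_k > 0$. Since $A = (a_1,\dots,a_m)$ has full rank $n$, the $a_i$ span $\C^n$, so any such $u_k$ must be the zero vector — a contradiction unless all $\lambda_k = 0$, i.e. $X = 0$. The reverse direction is trivial. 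The main obstacle, such as it is, is keeping track of where the full-rank hypothesis on $A$ enters (only in item 3) and handling the Hermitian/conjugate-transpose bookkeeping carefully so that $\Tr(X^*Y) = \Tr(XY)$ and $XY = (YX)^*$ are used correctly; the mathematical content is otherwise elementary.
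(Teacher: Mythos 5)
Your proof is correct, and for items 1 and 3 it is essentially the paper's argument in different clothing: the paper proves item 1 via $\la X,Y\ra=\Tr(Y^{1/2}XY^{1/2})\geq 0$ and item 3 by noting $\Tr(A^*X^{1/2}X^{1/2}A)=\|A^*X^{1/2}\|_F^2=0$ together with $\rank(A)=n$, and your spectral-decomposition versions invoke the full-rank hypothesis at exactly the same point. The genuine divergence is in item 2. The paper's forward direction is a block-matrix computation: it writes $Z=U^*XU$ in the eigenbasis of $Y$, deduces $Z_{11}=0$ from $\Sigma_1^{1/2}Z_{11}\Sigma_1^{1/2}=0$, then establishes $Z_{12}=0$ by a separate contradiction argument (sending a parameter $c\to-\infty$ in $v_c^*Zv_c\geq 0$), and only then concludes $XY=0$. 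Your route via the identity $\la X,Y\ra=\|Y^{1/2}X^{1/2}\|_F^2$ collapses all of this: vanishing of the inner product forces $Y^{1/2}X^{1/2}=0$, and sandwiching gives $YX=Y^{1/2}\bigl(Y^{1/2}X^{1/2}\bigr)X^{1/2}=0$, hence $XY=(YX)^*=0$. This is shorter, eliminates the auxiliary vector and the limiting argument entirely, and unifies items 1 and 2 under a single identity; the paper's block approach buys only the explicit identification of which blocks of $U^*XU$ must vanish, which is not needed elsewhere. The one step worth writing out carefully in your version is the chain $\Tr(XY)=\Tr\bigl(X^{1/2}X^{1/2}Y^{1/2}Y^{1/2}\bigr)=\Tr\bigl((Y^{1/2}X^{1/2})^*(Y^{1/2}X^{1/2})\bigr)$, which uses cyclicity of the trace and the Hermiticity of the square roots --- both available since $X,Y\succeq 0$ --- and you have flagged this correctly.
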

\begin{proof}
\textbf{(1)} Suppose $Y = U\Sig U^*$ is the singular value decomposition (SVD), let $Y^{\hf} := U\Sig^{\hf}U^*\succeq0$, where the diagonal elements of $\Sig^{\hf}$
are square roots of the singular values. Then we have $Y = Y^{\hf}Y^{\hf}$ and
$$
 \la X,Y \ra = \Tr(X^*Y) = \Tr(XY) = \Tr(Y^{\hf}XY^{\hf})\geq0.
$$
The last inequality holds because $Y^{\hf}XY^{\hf}\succeq0$.

\textbf{(2)} $"\Rightarrow"$ Further assume $\Sig = \left(\begin{array}{cc}
                                    \Sigma_1 & 0 \\
                                    0 & 0
                                  \end{array}\right)$,
where $\Sigma_1\succ0$, and let $Z = U^*XU\succeq0$.
By (1), we have $\Tr(Y^{\hf}XY^{\hf}) = \la X,Y \ra = 0$, thus $Y^{\hf}XY^{\hf}=0$. So
$$
0 = \Sigma^{\hf}U^*XU\Sigma^{\hf} = \Sigma^{\hf}Z\Sigma^{\hf} = \left(\begin{array}{cc}
          \Sigma_1^{\hf} & 0 \\
                 0 & 0
          \end{array}\right)
          \left(\begin{array}{cc}
          Z_{11} & Z_{12} \\
          Z_{12}^* & Z_{22}
          \end{array}\right)
          \left(\begin{array}{cc}
          \Sigma_1^{\hf} & 0 \\
                 0 & 0
          \end{array}\right)
          =
                    \left(\begin{array}{cc}
          \Sigma_1^{\hf}Z_{11}\Sigma_1^{\hf} & 0 \\
                 0 & 0
          \end{array}\right),
$$
then we have $\Sigma_1^{\hf}Z_{11}\Sigma_1^{\hf} =0$  and $Z_{11}=0$.
Next we want to show $Z_{12}=0$. Suppose $Z_{12}\neq0$, let us consider $v_c = \left(\begin{array}{c}
          cZ_{12}w  \\
            w
          \end{array}\right)\in\C^n$, where $w$ is a fixed vector making $Z_{12}w$ nonzero and $c\in\R$. Then since $Z\succeq0$, we have
$$
0\leq v_c^*Zv_c = (cw^*Z_{12}^*, w^*)\left(\begin{array}{cc}
          0 & Z_{12} \\
          Z_{12}^* & Z_{22}
          \end{array}\right)
          \left(\begin{array}{c}
          cZ_{12}w  \\
            w
          \end{array}\right)
          =  2c|Z_{12}w|^2 + w^*Z_{22}w, \quad \forall c\in\R.
$$
In above inequality, letting $c\to-\infty$ leads to a contradiction. Therefore $Z_{12}=0$. A simple computation gives
$U^*XU\Sig = Z\Sig = 0$, and thus $XY = XU\Sig U^* = 0$.

$"\Leftarrow"$ If $XY=0$, then $\la X,Y \ra = \Tr(X^*Y) = \Tr(XY)=0$

\textbf{(3)} $"\Rightarrow"$ Let $X^{\hf}\succeq0$ such that $X^{\hf}X^{\hf}=X$. Then
$$
0 = \|\A(X)\|_2 = \|\diag(A^*X^{\hf}X^{\hf}A)\|_2.
$$
So $\diag(A^*X^{\hf}X^{\hf}A)=0$ and thus $0 = \Tr(A^*X^{\hf}X^{\hf}A) = \|A^*X^{\hf}\|_F^2$. This together with $\rank(A)=n\leq m$ implies
$X^{\hf} = 0$.

$"\Leftarrow"$ Trivial.
\end{proof}

{\bf Karush-Kuhn-Tucker conditions.} Let us consider a first-order stationary point $\tilde{X}$ of the minimization problem
$$
\min_{X\in\C^{n\times n}} \; f(X) \quad \mbox{s.t.} \quad X\succeq 0.
$$
Suppose $f$ is differentiable at $\tX$, then there exists $\tilde{\Lam}\in\C^{n\times n}$, such that the following Karush-Kuhn-Tucker (KKT) optimality conditions hold:
\bi
\item[$\bullet$] Stationarity: $\nabla f(\tX) = \tLam$.
\item[$\bullet$] Primal feasibility: $\tX\succeq 0$.
\item[$\bullet$] Dual feasibility: $\tLam\succeq0$.
\item[$\bullet$] Complementary slackness: $\tX\tLam = 0$.
\ei
In order to make better use of the last condition, by Lemma \ref{fundmental} (2), we can express it as
\bi
\item[$\bullet$] Complementary slackness: $\la \tX, \tLam \ra= 0$.
\ei

\section{Exact and Stable Recovery Theory.}\label{theory}
In this section, we present the PhaseLiftOff theory for exact and stable recovery of complex signals.

\subsection{Equivalence.} We first develop mild conditions that guarantee the full
equivalence between Phase Retrieval (\ref{energymin}) and PhaseLiftOff (\ref{unl1l2}).
\medskip

\begin{thm}\label{equivalence}
Let $\A$ be an arbitrary linear operator from $\H^{n\times n}$ to $\R^m$, and let $b = \A(\hX) + e$. If $\|b\|_2>\|e\|_2$ and $\lam>\frac{\|\A\|\|e\|_2}{\sqrt{2}-1}$, suppose $X^{\mathrm{opt}}$ is a solution (global minimizer) to (\ref{unl1l2}), then $\rank(X^{\mathrm{opt}})=1$. Moreover, minimization problems (\ref{energymin}) and (\ref{unl1l2}) are equivalent in the sense that they share the same set of solutions.
\end{thm}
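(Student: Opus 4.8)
The plan is to reduce the whole statement to the single assertion that every global minimizer $X^{\mathrm{opt}}$ of (\ref{unl1l2}) has rank $1$, and to prove that assertion by combining the KKT conditions with a second–order perturbation estimate. Throughout write $p(X):=\Tr(X)-\|X\|_F\ge 0$ for $X\succeq0$, so that $p(X)=0$ iff $\rank(X)\le 1$. Since $\hX=\hat x\hat x^{*}$ is feasible for (\ref{unl1l2}) with $p(\hX)=0$, we have $\varphi(\hX)=\tfrac12\|e\|_2^2$, hence $\varphi(X^{\mathrm{opt}})\le\tfrac12\|e\|_2^2<\tfrac12\|b\|_2^2=\varphi(0)$ by the hypothesis $\|b\|_2>\|e\|_2$, so $X^{\mathrm{opt}}\neq0$. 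Once $\rank(X^{\mathrm{opt}})=1$ is known, the equivalence is immediate: for any rank-one feasible $X$ one has $\varphi(X)=\tfrac12\|\A(X)-b\|_2^2$, so the optimal value of (\ref{unl1l2}) equals $\tfrac12$ times that of (\ref{energymin}), and a feasible point attains one optimum iff it attains the other.

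So assume, for contradiction, $r:=\rank(X^{\mathrm{opt}})\ge2$, and write $X^{\mathrm{opt}}=\sum_{i=1}^{r}\sigma_i u_iu_i^{*}$ with $\sigma_1\ge\cdots\ge\sigma_r>0$ and orthonormal $u_i$. From $\varphi(X^{\mathrm{opt}})\le\tfrac12\|e\|_2^2$ we get $\|\A(X^{\mathrm{opt}})-b\|_2\le\|e\|_2$ and $\lam\,p(X^{\mathrm{opt}})\le\tfrac12\|e\|_2^2$. Since $X^{\mathrm{opt}}\neq0$, $\varphi$ is differentiable there, and the KKT conditions recalled above give $\tLam:=\nabla\varphi(X^{\mathrm{opt}})\succeq0$ with $\tLam X^{\mathrm{opt}}=0$, hence $\tLam u_i=0$; feeding $u_i$ into the stationarity relation $\tLam=\A^{*}(\A(X^{\mathrm{opt}})-b)+\lam(I-X^{\mathrm{opt}}/\|X^{\mathrm{opt}}\|_F)$ yields $\la\A(u_iu_i^{*}),\A(X^{\mathrm{opt}})-b\ra=\lam(\sigma_i/\|X^{\mathrm{opt}}\|_F-1)$. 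Using $\|\A(u_iu_i^{*})\|_2\le\|\A\|$, the bound $\|\A(X^{\mathrm{opt}})-b\|_2\le\|e\|_2$, the fact $\sigma_i\le\|X^{\mathrm{opt}}\|_F$, and the hypothesis $\|\A\|\|e\|_2<\lam(\sqrt2-1)$, this forces $\lam(1-\sigma_i/\|X^{\mathrm{opt}}\|_F)\le\|\A\|\|e\|_2<\lam(\sqrt2-1)$, i.e.\ $\sigma_i>(2-\sqrt2)\|X^{\mathrm{opt}}\|_F$ for every $i$. Summing over the $r\ge2$ eigenvalues, $p(X^{\mathrm{opt}})=\Tr(X^{\mathrm{opt}})-\|X^{\mathrm{opt}}\|_F>(3-2\sqrt2)\|X^{\mathrm{opt}}\|_F$, so $\|X^{\mathrm{opt}}\|_F<\frac{\|e\|_2^2}{2\lam(3-2\sqrt2)}$.

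Next use second-order optimality along $N:=u_1u_1^{*}-u_2u_2^{*}$ (available since $r\ge2$). For $|t|$ small, $X^{\mathrm{opt}}+tN$ remains positive semidefinite and nonzero, so $t\mapsto\varphi(X^{\mathrm{opt}}+tN)$ is $C^{2}$ near $0$ with $t=0$ an interior minimizer; hence its first derivative vanishes (consistently with $\la\tLam,N\ra=u_1^{*}\tLam u_1-u_2^{*}\tLam u_2=0$) and its second derivative at $0$ is $\ge0$. Computing from $\|X^{\mathrm{opt}}+tN\|_F=\sqrt{\|X^{\mathrm{opt}}\|_F^2+2t\la X^{\mathrm{opt}},N\ra+t^2\|N\|_F^2}$ gives $\|\A(N)\|_2^2-\frac{\lam}{\|X^{\mathrm{opt}}\|_F}\bigl(\|N\|_F^2-\frac{\la X^{\mathrm{opt}},N\ra^2}{\|X^{\mathrm{opt}}\|_F^2}\bigr)\ge0$; with $\|N\|_F^2=2$, $\la X^{\mathrm{opt}},N\ra=\sigma_1-\sigma_2$, and $(\sigma_1-\sigma_2)^2\le\sigma_1^2\le\|X^{\mathrm{opt}}\|_F^2$, this yields $\|\A(N)\|_2^2\ge\lam/\|X^{\mathrm{opt}}\|_F$, while $\|\A(N)\|_2\le\sqrt2\|\A\|$, so $\|X^{\mathrm{opt}}\|_F\ge\lam/(2\|\A\|^2)$. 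Combining with the upper bound from the previous paragraph forces $\lam^2(3-2\sqrt2)<\|\A\|^2\|e\|_2^2$, i.e.\ $\lam<\frac{\|\A\|\|e\|_2}{\sqrt2-1}$ since $3-2\sqrt2=(\sqrt2-1)^2$, contradicting the hypothesis. Hence $\rank(X^{\mathrm{opt}})\le1$, and with $X^{\mathrm{opt}}\neq0$ we conclude $\rank(X^{\mathrm{opt}})=1$.

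I expect the second-order step to be the delicate point: one must check that the chosen direction is two-sidedly feasible, that $\varphi$ is genuinely $C^{2}$ along it near the minimizer, and correctly differentiate $\|X^{\mathrm{opt}}+tN\|_F$ twice; conceptually, the crux is realizing that a \emph{lower} bound on $\|X^{\mathrm{opt}}\|_F$ (from second-order optimality) paired with an \emph{upper} bound (from the KKT-forced near-balance of the eigenvalues together with the noise bound on $p$) is exactly what yields the contradiction with the stated constant $1/(\sqrt2-1)$.
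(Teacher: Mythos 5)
Your proof is correct, but it takes a genuinely different route from the paper's. The paper stays entirely at first order: it substitutes $I_n=U_1U_1^*+U_2U_2^*$ into the stationarity equation, notes that $U_1U_1^*$ is orthogonal to both $U_2U_2^*$ and $\Lambda$, and applies the Pythagorean theorem to get $\|\A^*(\A(X^{\mathrm{opt}})-b)\|_F\ge\lam(\sqrt r-1)$; paired with the upper bound $\|\A\|\|e\|_2$ this yields $r<2$ in one stroke. You instead keep only the diagonal consequences $u_i^*\tLam u_i=0$ of the same KKT system, which give $\sigma_i>(2-\sqrt 2)\|X^{\mathrm{opt}}\|_F$ for every $i$ --- enough to exclude $r\ge3$ but not $r=2$ (for $r=2$ one only gets $2(2-\sqrt2)^2<1$, consistent with $\sum_i\sigma_i^2=\|X^{\mathrm{opt}}\|_F^2$) --- and you close the gap with two ingredients the paper never needs: the penalty bound $\lam(\Tr(X^{\mathrm{opt}})-\|X^{\mathrm{opt}}\|_F)\le\frac12\|e\|_2^2$, giving an upper bound on $\|X^{\mathrm{opt}}\|_F$, and a second-order condition along the two-sidedly feasible direction $u_1u_1^*-u_2u_2^*$, giving the lower bound $\|X^{\mathrm{opt}}\|_F\ge\lam/(2\|\A\|^2)$. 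I checked the delicate points: the direction stays in the cone for $|t|<\sigma_2$, the second derivative of $t\mapsto\|X^{\mathrm{opt}}+tN\|_F$ is computed correctly, and the identity $3-2\sqrt2=(\sqrt2-1)^2$ makes the two bounds collide exactly at the stated threshold (the $e=0$ case degenerates harmlessly, since then the penalty bound already forces rank one). The paper's argument is shorter and uses strictly less structure, exploiting the off-diagonal content of the stationarity equation that your diagonal projection discards; your version is about twice as long but shows the same constant $1/(\sqrt2-1)$ arising independently from curvature information, which is a nice consistency check on the sharpness of the threshold. The reduction of the equivalence claim to the rank-one claim is the same in both proofs.
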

\begin{proof}
Let $X^{\mathrm{opt}}$ be a solution to (\ref{unl1l2}). Since $\varphi(\hX) = \hf\|e\|_2^2<\hf\|b\|_2^2=\varphi(0)$, $X^{\mathrm{opt}}\neq0$. Suppose $\rank(X^{\mathrm{opt}})=r\geq1$, and let
\begin{equation*}
X^{\mathrm{opt}} = U\Sig U^* =  (U_1,U_2)\left(\begin{array}{cc}
                                    \Sigma_1 & 0 \\
                                    0 & 0
                                  \end{array}\right)
(U_1,U_2)^*
=U_1\Sigma_1 U_1^*
\end{equation*}
be the SVD, where $U_1 = (u_1,\dots,u_r)\in \C^{n\times r}$, $U_2=(u_{r+1},\dots,u_n)\in \C^{n\times (n-r)}$, and $\Sigma_1= \Diag((\sigma_1,\dots,\sigma_r))\in \R^{r\times r}$ with $X^{\mathrm{opt}}$'s positive singular values on its diagonal.

Since $X^{\mathrm{opt}}$ is a global minimizer, it is also a stationary point. This means KKT conditions must hold at $X^{\mathrm{opt}}$, i.e., there exists $\Lam \in \C^{n\times n}$ such that
\begin{align}\label{opt}
& \A^*(\A(X^{\mathrm{opt}})-b) + \lam(I_n - \frac{X^{\mathrm{opt}}}{\|X^{\mathrm{opt}}\|_F}) = \Lam,\\
& X^{\mathrm{opt}}\succeq 0, \; \Lam\succeq0, \; \la X^{\mathrm{opt}}, \Lam \ra = 0.\notag
\end{align}
Rewrite $I_n = UU^* = U_1U_1^* + U_2U_2^*$, then (\ref{opt}) becomes
\begin{align*}
-\A^*(\A(X^{\mathrm{opt}})-b)  = \lam(I_n - \frac{X^{\mathrm{opt}}}{\|X^{\mathrm{opt}}\|_F}) - \Lam = \lam U_1U_1^* + (\lam U_2U_2^*-\Lam)-\lam \frac{X^{\mathrm{opt}}}{\|X^{\mathrm{opt}}\|_F}.
\end{align*}
Taking Frobenius norm of both sides above, we obtain
\begin{align}\label{ineq1}
\|\A^*(\A(X^{\mathrm{opt}})-b)\|_F & = \| \lam U_1U_1^* + (\lam U_2U_2^*-\Lam)-\lam \frac{X^{\mathrm{opt}}}{\|X^{\mathrm{opt}}\|_F} \|_F
\geq \| \lam U_1U_1^* + (\lam U_2U_2^*-\Lam) \|_F -  \lam.
\end{align}
Also, we have $0 = \la X^{\mathrm{opt}},\Lam\ra = \la U_1\Sigma_1 U_1^*, \Lam\ra = \sum_{i=1}^r\sigma_i\la u_iu_i^*,\Lam \ra$. But $\la u_iu_i^*,\Lam \ra\geq0$, since $\Lam\succeq0$ and $u_iu_i^*\succeq0$. So $\la u_i u_i^*, \Lam\ra = 0$ for $1\leq i\leq m$, and $\la U_1 U_1^*, \Lam\ra = \sum_{i=1}^r\la u_iu_i^*,\Lam \ra
= 0$. Moreover,
$$
\la U_1 U_1^*, U_2U_2^*\ra = \sum_{i=1}^r\sum_{j=r+1}^n \la u_iu_i^*,u_ju_j^* \ra = \sum_{i=1}^r\sum_{j=r+1}^n \la u_j^*u_i,u_j^*u_i \ra = 0.
$$
In a word, $U_1U_1^*$ is orthogonal to both $U_2U_2^*$ and $\Lam$. Then from Pythagorean theorem it follows that
$$
\| \lam U_1U_1^* + (\lam U_2U_2^*-\Lam) \|_F = \sqrt{\|\lam U_1U_1^*\|_F^2 + \|(\lam U_2U_2^*-\Lam) \|_F^2}\geq \lam\| U_1U_1^*\|_F,
$$
and thus (\ref{ineq1}) reduces to
\be\label{ineq2}
\|\A^*(\A(X^{\mathrm{opt}})-b)\|_F \geq \lam\| U_1U_1^*\|_F  -  \lam = \lam(\sqrt{r}-1).
\ee
On the other hand, since
$$
\|\A(X^{\mathrm{opt}})-b\|_2 \leq \sqrt{\|\A(X^{\mathrm{opt}})-b\|_2^2+2\lam(\Tr(X^{\mathrm{opt}})-\|X^{\mathrm{opt}}\|_F)} = \sqrt{2\varphi(X^{\mathrm{opt}}}),
$$
we have
\begin{align}\label{ineq3}
\|\A^*(\A(X^{\mathrm{opt}})-b)\|_F & \leq \|\A^*\| \|\A(X^{\mathrm{opt}})-b\|_2 = \|\A\| \|\A(X^{\mathrm{opt}})-b\|_2 \notag\\
& \leq \|\A\|\sqrt{2\varphi(X^{\mathrm{opt}})}\leq\|\A\|\sqrt{2\varphi(\hX)}=\|\A\|\|e\|_2.
\end{align}
Combining (\ref{ineq2}) and (\ref{ineq3}) gives $\lam(\sqrt{r}-1)\leq \|\A\|\|e\|_2$, or equivalently
$$
r\leq (\frac{\|\A\|\|e\|_2}{\lam}+1)^2 <2.
$$
The last inequality above follows from the assumption $\lam>\frac{\|\A\|\|e\|_2}{\sqrt{2}-1}$. $r$ is a natural number, so $r = 1$.

Note that $\Tr(X)-\|X\|_F \geq0$ for $X\succeq0$ with equality when $\rank(X)=1$. It is not hard to see the equivalence between (\ref{energymin}) and (\ref{unl1l2}).
\end{proof}

\begin{cor}\label{noiseless}
In the absence of measurement noise, the equivalence
between (\ref{energymin}) and (\ref{unl1l2}) holds for all $\lam>0$.
In this sense, (\ref{unl1l2}) is essentially a parameter-free model.
\end{cor}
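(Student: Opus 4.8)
The plan is to obtain the statement as an immediate specialization of Theorem \ref{equivalence} to the noiseless setting $e = 0$, after checking that both hypotheses of that theorem are satisfied automatically for every $\lam > 0$.

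First I would observe that with $e = 0$ the threshold condition $\lam > \frac{\|\A\|\|e\|_2}{\sqrt{2}-1}$ collapses to $\lam > 0$, so it holds vacuously for any admissible regularization parameter. Second, I would verify the remaining hypothesis $\|b\|_2 > \|e\|_2 = 0$, i.e.\ $b = \A(\hX) \neq 0$: if the ground-truth signal $\hat{x}$ is nonzero then $\hX = \hat{x}\hat{x}^* \neq 0$, and Lemma \ref{fundmental}(3) yields $\|\A(\hX)\|_2 > 0$; the degenerate case $\hat{x} = 0$ is trivial, since then $b = 0$ and $X = 0$ is the unique common solution of both (\ref{energymin}) and (\ref{unl1l2}). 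Hence Theorem \ref{equivalence} applies for every $\lam > 0$, giving at once that each global minimizer $X^{\mathrm{opt}}$ of (\ref{unl1l2}) has rank one and that (\ref{energymin}) and (\ref{unl1l2}) share the same solution set.

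Finally, for the ``parameter-free'' assertion I would point out that the conclusion just obtained holds with a solution set that does not depend on the particular value of $\lam$: for every $\lam > 0$ the minimizers of (\ref{unl1l2}) coincide with the minimizers of (\ref{energymin}), and the latter problem involves no parameter at all. Thus in the noiseless case the role of $\lam$ is immaterial, in sharp contrast to the trace-norm regularization (\ref{unl1}), where $\lam$ must be tuned. There is essentially no real obstacle in this argument; the only point deserving a word of care is excluding $\hX = 0$, so that the strict inequality $\|b\|_2 > \|e\|_2$ required by Theorem \ref{equivalence} is genuinely met rather than degenerating to $0 > 0$.
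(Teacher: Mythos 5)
Your proposal is correct and matches the paper's (implicit) argument: the corollary is stated as an immediate specialization of Theorem \ref{equivalence} to $e=0$, where the threshold collapses to $\lam>0$. Your extra care in verifying $\|b\|_2>0$ via Lemma \ref{fundmental}(3) and handling the degenerate case $\hat{x}=0$ is a sound refinement of the same route, not a different one.
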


Theorem \ref{equivalence} claims that provided the noise in measurement is smaller than the measurement itself, all $\lam$ that exceed an explicit threshold would work equally well for (\ref{unl1l2}) in theory. In contrast, the $\lam$ in (\ref{unl1}) needs to be carefully chosen to balance the fidelity and penalty terms. Particularly in noiseless case, the $\lam$ in (\ref{unl1l2}) acts like a 'fool-proof' regularization parameter, and $\A(X) = b$ is always exact at the solution $X^{\mathrm{opt}}=\hX$ whenever $\lam>0$, whereas a perfect reconstruction via solving (\ref{unl1}) generally requires a dynamic $\lam$ that goes to 0.

\begin{rmk}
Despite the tremendous room for $\lam$ values in view of Theorem \ref{equivalence}, we should point out that in practice the choice of $\lam$ could be more subtle because
\bi
\item[$\bullet$] The theoretical lower bound $\frac{\|\A\|\|e\|_2}{\sqrt{2}-1}$ for $\lam$ may be too stringent, and a smaller $\lam$ could also be feasible.
\item[$\bullet$] Choosing $\lam$ too large may reduce the mobility of the energy minimizing iterations due to trapping by local minima.
\ei
An efficient algorithm designed for PhaseLiftOff should be as insensitive as possible to the choice of $\lam$ when it is large enough.
\end{rmk}

\subsection{Exact and stable recovery under Gaussian measurements.}
In the framework of \cite{CSV, CL}, assuming $a_i$'s are i.i.d. complex-valued normally distributed random vectors,
we establish the exact recovery and stability results for (\ref{unl1l2}).
Due to the equivalence between (\ref{energymin}) and (\ref{unl1l2}) under the
conditions stated in Theorem \ref{equivalence}, it suffices to discuss
the model (\ref{energymin}) only. Similar to \cite{CL}, $m = O(n)$ measurements suffice
to ensure exact recovery in noiseless case or stability in noisy case with probability close to 1.
Although the required number of measurements for (\ref{energymin}) and that for PhaseLift are both on the minimal order $O(n)$,
the scalar factor of the former is actually smaller, and so is the probability of failure.
\medskip

\begin{thm}\label{stability}
Suppose column vectors of $A$ are i.i.d. complex-valued normally distributed. Fix $\alpha\in(0,1)$, there are constants $\theta, \gamma>0$ such that if $m > \theta[\alpha^{-2}\log \alpha^{-1}]n$, for any $\hX$, (\ref{energymin}) is stable in the sense that its solution $X^{\mathrm{opt}}$ satisfies
\be\label{errbd}
\|X^{\mathrm{opt}}- \hX\|_F \leq C_\alpha\frac{\|e\|_2}{\sqrt{m}}
\ee
for some constant $C_\alpha :=  \frac{\sqrt{2}}{(\sqrt{2}-1)(1-\alpha)} > 0$ with probability at least $1-3e^{-\gamma m \alpha^2}$. In particular, when $e=0$, the recovery is exact.
\end{thm}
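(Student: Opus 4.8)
The plan is to combine the optimality of $X^{\mathrm{opt}}$ (a purely deterministic input) with a lower restricted–isometry bound for $\A$ on the set of Hermitian matrices that arise as differences of two rank-one positive semidefinite matrices; the latter is the probabilistic core and follows the Gaussian-measurement framework of \cite{CSV,CL}.

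\textbf{Step 1 (optimality $\Rightarrow$ a constraint on $\A(X^{\mathrm{opt}}-\hX)$).} Since $\hX=\hat x\hat x^*$ is feasible for (\ref{energymin}) and $X^{\mathrm{opt}}$ is a global minimizer, $\|\A(X^{\mathrm{opt}})-b\|_2\le\|\A(\hX)-b\|_2=\|e\|_2$. Setting $Z:=X^{\mathrm{opt}}-\hX$ and using $b=\A(\hX)+e$, the triangle inequality yields
\[
\|\A(Z)\|_2\ \le\ \|\A(X^{\mathrm{opt}})-b\|_2+\|\A(\hX)-b\|_2\ \le\ 2\|e\|_2 .
\]
Because $X^{\mathrm{opt}}\succeq0$ with $\rank(X^{\mathrm{opt}})=1$ and $\hX\succeq 0$ with rank one, $Z$ is Hermitian of the form $xx^*-yy^*$, hence has rank at most $2$ with at most one positive and at most one negative eigenvalue. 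Note $Z$ depends on $\hX$, so the uniform-over-$\hX$ claim of the theorem requires a bound that is uniform over all such $Z$.

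\textbf{Step 2 (restricted lower isometry — the crux).} I would prove that there are absolute constants $\theta,\gamma>0$ such that, if $m>\theta[\alpha^{-2}\log\alpha^{-1}]n$, then with probability at least $1-3e^{-\gamma m\alpha^2}$,
\[
\tfrac1m\|\A(H)\|_1\ \ge\ (2-\sqrt2)(1-\alpha)\,\|H\|_F\qquad\text{for all Hermitian }H=xx^*-yy^*,\ x,y\in\C^n .
\]
Since $\|v\|_2\ge m^{-1/2}\|v\|_1$ for $v\in\R^m$, this gives $\|\A(H)\|_2\ge(2-\sqrt2)(1-\alpha)\sqrt m\,\|H\|_F$ on the same set. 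To establish the $\ell_1$ bound I would: (i) for a fixed $H$ with $\|H\|_F=1$, compute $\mathbb{E}\,|a_1^*Ha_1|$ by diagonalizing $H=\mu_1u_1u_1^*+\mu_2u_2u_2^*$ in an orthonormal eigenbasis, so that $a_1^*Ha_1=\mu_1|g_1|^2+\mu_2|g_2|^2$ with $g_1,g_2$ i.i.d.\ standard complex Gaussians, and minimize the resulting elementary one–dimensional expectation over $(\mu_1,\mu_2)$ on the unit circle to extract the lower constant; (ii) apply Bernstein's inequality to the i.i.d.\ sub-exponential summands $|a_i^*Ha_i|$ to obtain pointwise concentration $\Pr\big[\,\big|\tfrac1m\|\A(H)\|_1-\mathbb{E}\big|>\alpha\,\big]\le 2e^{-cm\alpha^2}$; (iii) cover the compact set $\{H=xx^*-yy^*:\|H\|_F=1\}$ — parametrized by $O(n)$ real parameters — by an $\alpha$-net of cardinality $e^{O(n\log\alpha^{-1})}$, union–bound (ii) over the net, and use a Lipschitz-continuity estimate (with $\max_i\|a_i\|^2=O(n)$ controlled on a high-probability event) to pass from the net to all $H$. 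Matching the net size against the concentration rate forces $m=O\!\big(\alpha^{-2}n\log\alpha^{-1}\big)$, and the three high-probability events (net union bound, the bound on $\max_i\|a_i\|^2$, and one auxiliary bound) account for the factor $3$ in $3e^{-\gamma m\alpha^2}$.

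\textbf{Step 3 (conclusion, constants, and the main obstacle).} On the good event, Steps 1 and 2 together give $(2-\sqrt2)(1-\alpha)\sqrt m\,\|Z\|_F\le\|\A(Z)\|_2\le 2\|e\|_2$, so, using $\tfrac{2}{2-\sqrt2}=2+\sqrt2=\tfrac{\sqrt2}{\sqrt2-1}$,
\[
\|X^{\mathrm{opt}}-\hX\|_F=\|Z\|_F\ \le\ \frac{2}{(2-\sqrt2)(1-\alpha)}\cdot\frac{\|e\|_2}{\sqrt m}\ =\ \frac{\sqrt2}{(\sqrt2-1)(1-\alpha)}\cdot\frac{\|e\|_2}{\sqrt m}\ =\ C_\alpha\frac{\|e\|_2}{\sqrt m},
\]
which is (\ref{errbd}); when $e=0$ the right-hand side vanishes, forcing $Z=0$ and hence exact recovery $X^{\mathrm{opt}}=\hX$. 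Everything nontrivial sits in Step 2: the hard part is obtaining a lower isometry bound that is \emph{uniform} over the non-convex set of rank-$\le2$ difference matrices. The two sticking points are the heavier-than-Gaussian (sub-exponential) tails of the quadratic forms $a_i^*Ha_i$, which necessitate Bernstein-type control and careful handling of the covering/Lipschitz step, and pinning down a sharp enough expectation constant (here $2-\sqrt2$) so that Step 3 reproduces exactly the stated $C_\alpha$. Steps 1 and 3 are short and deterministic.
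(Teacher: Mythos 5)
Your proposal is correct and follows essentially the same route as the paper: the deterministic Steps 1 and 3 are exactly the paper's argument, and your Step 2 lower isometry bound $\frac{1}{m}\|\A(H)\|_1\geq(2-\sqrt2)(1-\alpha)\|H\|_F$ for rank-$\le 2$ Hermitian $H$ is precisely Lemma 5.1 of \cite{CSV} (stated there as $2(\sqrt2-1)(1-\alpha)\|H\|_2$ and converted via $\|H\|_2\geq\|H\|_F/\sqrt2$), which the paper simply cites rather than reproves. The only difference is that you sketch a from-scratch proof of that lemma (expectation computation, Bernstein, covering net), whereas the paper treats it as a black box; your sketch is consistent with how \cite{CSV} establishes it.
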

The proof is straightforward with the aid of Lemma 5.1 in \cite{CSV}:
\begin{lem}[\cite{CSV}]\label{lemma}
Under the assumption of Theorem \ref{stability}, we have that $\A$ obeys the following property with
probability at least $1-3e^{-\gamma m \alpha^2}$: for any Hermitian matrix $X$ with $\rank(X)\leq 2$,
$$
\frac{1}{m}\|\A(X)\|_1 \geq 2(\sqrt{2}-1)(1-\alpha)\|X\|_2.
$$
\end{lem}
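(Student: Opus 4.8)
The plan is to establish this restricted lower bound in two stages: a sharp \emph{expectation} estimate that pins down the constant $2(\sqrt2-1)$, followed by a \emph{concentration-and-net} argument that upgrades it to a uniform bound over all rank-$\le 2$ Hermitian matrices and fixes the required number of measurements. By homogeneity both sides scale linearly in $X$, so it suffices to treat $\|X\|_2=1$. Fixing such a Hermitian $X$ with $\rank(X)\le 2$, I write its spectral decomposition $X=\lam_1 u_1u_1^*+\lam_2 u_2 u_2^*$ with $\max(|\lam_1|,|\lam_2|)=1$. Since each column $a=a_i$ is a standard complex Gaussian vector, it is unitarily invariant, so the components $z_j=u_j^*a$ are i.i.d.\ standard complex Gaussians and $(\A(X))_i = a^*Xa = \lam_1|z_1|^2+\lam_2|z_2|^2$, where $|z_1|^2,|z_2|^2$ are i.i.d.\ $\mathrm{Exp}(1)$. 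A direct computation of $\mathbb{E}|\lam_1 E_1+\lam_2 E_2|$ with $E_1,E_2\sim\mathrm{Exp}(1)$, minimized over the eigenvalue ratio, attains its minimum $2(\sqrt2-1)$ at the opposite-sign ratio $1:-(\sqrt2-1)$. This gives $\mathbb{E}[\tfrac1m\|\A(X)\|_1]=\mathbb{E}|a^*Xa|\ge 2(\sqrt2-1)\|X\|_2$ for every fixed admissible $X$, which is exactly the target inequality in expectation.

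\textbf{Pointwise concentration.} For fixed $X$, $\tfrac1m\|\A(X)\|_1=\tfrac1m\sum_i|a_i^*Xa_i|$ is an average of i.i.d.\ nonnegative random variables, each a quadratic form in a Gaussian vector and hence sub-exponential with parameters controlled by $\|X\|_2$. A Bernstein-type lower-tail inequality then yields, for small $\alpha$, $\Pr[\tfrac1m\|\A(X)\|_1<(1-\tfrac\alpha2)\,2(\sqrt2-1)]\le e^{-c m\alpha^2}$ for an absolute constant $c>0$; the lower tail of a sum of nonnegative sub-exponentials is the favorable direction here.

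\textbf{Uniformization.} I would pass from a single $X$ to all of $T=\{X:X^*=X,\ \rank(X)\le2,\ \|X\|_2=1\}$ by covering. The set $T$ is compact and lies on an $O(n)$-dimensional variety, so it admits an $\eps$-net $\mathcal N$ in Frobenius distance with $|\mathcal N|\le (C/\eps)^{O(n)}$. Step 2 together with a union bound over $\mathcal N$ controls the functional on the net. For the discretization error, for \emph{arbitrary} matrices the Lipschitz estimate $\big|\tfrac1m\|\A(X)\|_1-\tfrac1m\|\A(X')\|_1\big|\le\tfrac1m\|\A(X-X')\|_1\le\tfrac{1}{\sqrt m}\|\A(X-X')\|_2\le\tfrac{\|\A\|}{\sqrt m}\|X-X'\|_F$ holds (the rank of $X-X'$ is irrelevant), where $\|\A\|/\sqrt m=O(1)$ with high probability for Gaussian $A$. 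Choosing $\eps\sim\alpha$ makes this error comparable to the $\alpha$-slack; then $\log|\mathcal N|\sim n\log(1/\alpha)$, and requiring $|\mathcal N|\,e^{-cm\alpha^2}\le e^{-\gamma m\alpha^2}$ forces $m\gtrsim \alpha^{-2}\log\alpha^{-1}\,n$, which is precisely the hypothesis $m>\theta[\alpha^{-2}\log\alpha^{-1}]n$ and delivers the failure probability $3e^{-\gamma m\alpha^2}$.

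\textbf{Main obstacle.} The expectation computation is clean and the pointwise concentration is routine; the crux is the uniformization over the \emph{non-convex} rank-$\le2$ set while controlling the nonsmooth $\ell_1$ functional. Two points need care: calibrating the net granularity $\eps\sim\alpha$ so the Lipschitz discretization error is absorbed into the $(1-\alpha)$ factor---this is exactly what produces the $\alpha^{-2}\log\alpha^{-1}$ dependence---and verifying both the operator-norm bound $\|\A\|=O(\sqrt m)$ and the cardinality estimate $|\mathcal N|=e^{O(n)}$, so that the union bound already survives at $m=O(n)$.
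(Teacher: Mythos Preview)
The paper does not prove this lemma at all: it is quoted verbatim as Lemma~5.1 of \cite{CSV}, and the only work done here is to \emph{apply} it in the proof of Theorem~\ref{stability}. So there is no ``paper's own proof'' to compare against; you are effectively reconstructing the argument from \cite{CSV}. Your three-stage outline (sharp expectation computation yielding the constant $2(\sqrt2-1)$, Bernstein-type lower-tail concentration for a fixed $X$, then a covering/net argument over the rank-$\le2$ shell) is the right architecture and matches the standard approach.

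There is, however, a genuine gap in your uniformization step. You assert that $\|\A\|/\sqrt m=O(1)$ with high probability, and you emphasize that ``the rank of $X-X'$ is irrelevant''. This is false: the paper itself computes in Section~3.3 (and Table~\ref{Anorm}) that $\|\A\|=O(\sqrt{mn})$, so $\|\A\|/\sqrt m=O(\sqrt n)$, not $O(1)$. With your Lipschitz bound as written, the discretization error is $O(\sqrt n\,\eps)$, forcing $\eps\sim\alpha/\sqrt n$ and hence $\log|\mathcal N|\sim n\log(\sqrt n/\alpha)$, which injects an unwanted $\log n$ into the sample complexity. The rank of $X-X'$ is \emph{not} irrelevant: precisely because $X,X'\in T$ have rank $\le2$, the difference has rank $\le4$, and you should instead invoke the $\ell_1$ upper isometry (Lemma~\ref{isometry} here, Lemma~3.1 in \cite{CSV}) to get
\[
\tfrac1m\|\A(X-X')\|_1\le(1+\delta)\|X-X'\|_*\le 2(1+\delta)\|X-X'\|_F,
\]
an $O(1)$ Lipschitz constant. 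With that correction your net argument goes through with $\eps\sim\alpha$ and recovers the stated $m\gtrsim\alpha^{-2}\log\alpha^{-1}\,n$ threshold.
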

{\bf Proof of Theorem \ref{stability}.}
\begin{proof}
Let $X^{\mathrm{opt}} = \hX + H$, where $\hX $ satisfies $\A(\hX) + e = b$,
then $H$ is Hermitian with $\rank(H)\leq 2$. Since
$$
\|e\|_2 = \|\A(\hX) - b \|_2 \geq \|\A(X^{\mathrm{opt}}) - b\|_2 \geq \|\A(X^{\mathrm{opt}}-\hX)\|_2-\|\A(\hX) - b\|_2,
$$
we have $\|\A(H)\|_2 \leq 2\|e\|_2$. Invoking Lemma \ref{lemma} above, we further have
$$
{1\over \sqrt{m}} \|\A(H)\|_2 \geq \frac{1}{m}\|\A(H)\|_1 \geq 2(\sqrt{2}-1)(1-\alpha)\|H\|_2 \geq \frac{2(\sqrt{2}-1)(1-\alpha)}{\sqrt{2}}\|H\|_F.
$$
Therefore,
$$
\|X^{\mathrm{opt}}- \hX\|_F = \|H\|_F\leq \frac{\sqrt{2}}{(\sqrt{2}-1)(1-\alpha)}\frac{\|e\|_2}{\sqrt{m}}.
$$
The above inequality holds with probability at least $1-3e^{-\gamma m \alpha^2}$.
\end{proof}

\subsection{Computation of $\|\A\|$.}
The $\|\A\|$ in Theorem \ref{equivalence} can be actually computed. To do this, we first prove the following result:
\begin{lem}\label{inv}
$\A\A^*(x) = (A^*A\circ\overline{A^*A})x$, $\forall x\in\R^m$, where the overline denotes complex conjugate.
\end{lem}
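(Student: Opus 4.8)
The plan is to unravel the definitions of $\A$ and $\A^*$ and then carry out a short entrywise computation, exploiting the Hermitian symmetry of $A^*A$ to produce the Hadamard (entrywise) product with the conjugate. First, recalling that $\A(X)=\diag(A^*XA)$ and $\A^*(x)=A\Diag(x)A^*$, one immediately obtains
$$
\A\A^*(x) = \diag\!\big(A^*\,(A\Diag(x)A^*)\,A\big) = \diag\!\big(M\Diag(x)M\big), \qquad M := A^*A \in \C^{n\times n},
$$
where we have used that $M=A^*A$ is Hermitian, so the trailing factor $A^*A$ equals $M$ and $M^*=M$.

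Next I would compute the $i$-th component of the right-hand side directly. For the diagonal of a triple matrix product,
$$
\big[M\Diag(x)M\big]_{ii} = \sum_{j,k} M_{ij}\,[\Diag(x)]_{jk}\,M_{ki} = \sum_{j=1}^m M_{ij}\, x_j\, M_{ji}.
$$
Since $M$ is Hermitian, $M_{ji}=\overline{M_{ij}}$, hence $M_{ij}M_{ji} = M_{ij}\overline{M_{ij}} = (M\circ\overline{M})_{ij}$. Therefore $[\A\A^*(x)]_i = \sum_j (M\circ\overline{M})_{ij}\,x_j = [(M\circ\overline{M})x]_i$, which is precisely the claimed identity with $M=A^*A$. (As a sanity check, each $x_j$ is real and each $|M_{ij}|^2\ge 0$, so $\A\A^*(x)\in\R^m$, consistent with $\A\A^*$ mapping into $\R^m$.)

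The computation is routine; the only step requiring any care is the second display — one must recognize that it is the Hermitian symmetry $M_{ji}=\overline{M_{ij}}$, rather than any special structure of $A$ itself, that turns the matrix product into the entrywise product $A^*A\circ\overline{A^*A}$. I do not anticipate a genuine obstacle beyond this bookkeeping.
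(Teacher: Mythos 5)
Your proof is correct and is essentially identical to the paper's: both unravel the definitions to get $\diag(A^*A\Diag(x)A^*A)$, compute the $i$-th diagonal entry as $\sum_j (A^*A)_{ij}\,x_j\,(A^*A)_{ji}$, and use Hermitian symmetry to turn $(A^*A)_{ij}(A^*A)_{ji}$ into $(A^*A\circ\overline{A^*A})_{ij}$. The only slip is cosmetic: with $A\in\C^{n\times m}$ your $M=A^*A$ lives in $\C^{m\times m}$, not $\C^{n\times n}$, consistent with your (correct) summation over $j=1,\dots,m$.
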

\begin{proof}
By the definitions of $\A$ and $\A^*$, $\A\A^*(x)= \diag(A^*A\Diag(x)A^*A)$, then $\forall 1\leq i\leq m$, the $i$-th entry of $\A\A^*(x)$ reads
\begin{align*}
(\A\A^*(x))_i & = (A^*A\Diag(x)A^*A)_{ii} = \sum_{j=1}^m x_j(A^*A)_{ij}(A^*A)_{ji} \\
& = \sum_{j=1}^m x_j(A^*A)_{ij}\overline{(A^*A)_{ij}} = \sum_{j=1}^m x_j(A^*A\circ\overline{A^*A})_{ij}\\
& = ((A^*A\circ\overline{A^*A})x)_i.
\end{align*}
\end{proof}
Hence, from Lemma \ref{inv} and (\ref{operator}) it follows that
$$
\|\A\| = \sqrt{\|\A\A^*\|} = \sqrt{\|A^*A\circ\overline{A^*A}\|_2}.
$$
It would be interesting to see how fast $\|\A\|$ grows with dimensions $n$ and $m$ when $A$ is a complex-valued random Gaussian matrix. In this setting, $\A$ enjoys approximate $\ell_1$-isometry properties as revealed by Lemma 3.1 of \cite{CSV} (in complex case). Here we are most interested in the part that concerns the upper bound:
\begin{lem}[\cite{CSV}]\label{isometry}
Suppose $A\in\C^{n\times n}$ is random Gaussian. Fix any $\delta>0$ and assume $m\geq16\delta^{-2}n$. Then with probability at least $1-e^{-m\epsilon^2/2}$, where $\delta/4 = \epsilon^2 + \epsilon$,
$$
\frac{1}{m}\|\A(X)\|_1\leq(1+\delta)\|X\|_*
$$
holds for all $X\in\C^{n\times n}$.
\end{lem}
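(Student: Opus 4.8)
The plan is to reduce the claimed bound, which must hold simultaneously for all $X\in\C^{n\times n}$, to a single spectral estimate for the measurement matrix $A=(a_1,\dots,a_m)$, and then to get that estimate from classical concentration for the largest singular value of a rectangular Gaussian matrix. First I would use the SVD $X=\sum_{k=1}^n\sigma_k u_kv_k^*$, with $\sigma_k\ge0$, $\{u_k\}$ and $\{v_k\}$ orthonormal, and $\sum_k\sigma_k=\|X\|_*$. Since $(\A(X))_i=a_i^*Xa_i=\sum_k\sigma_k(a_i^*u_k)(v_k^*a_i)$ and $|v_k^*a_i|=|a_i^*v_k|$, the triangle inequality followed by $|a_i^*u_k|\,|a_i^*v_k|\le\tfrac12(|a_i^*u_k|^2+|a_i^*v_k|^2)$ gives
\[
\frac1m\|\A(X)\|_1\;\le\;\sum_{k=1}^n\sigma_k\cdot\frac1m\sum_{i=1}^m|a_i^*u_k|\,|a_i^*v_k|\;\le\;\|X\|_*\sup_{\|u\|_2=1}\frac1m\sum_{i=1}^m|a_i^*u|^2 .
\]
The supremum equals $\bigl\|\tfrac1m\sum_ia_ia_i^*\bigr\|_2=\tfrac1m\|AA^*\|_2=\tfrac1m\|A\|_2^2$, so the lemma reduces to showing that $\|A\|_2^2\le(1+\delta)m$ with probability at least $1-e^{-m\epsilon^2/2}$.

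The key algebraic point is that the hypothesis $\delta/4=\epsilon^2+\epsilon$ is exactly equivalent to $(1+2\epsilon)^2=1+\delta$, so the target becomes $\|A\|_2\le(1+2\epsilon)\sqrt m$. I would obtain this from two standard facts about an $n\times m$ complex Gaussian matrix. First, by Gaussian comparison inequalities (e.g.\ Gordon's theorem, or Chevet's inequality) one has $\mathbb{E}\|A\|_2\le\sqrt m+\sqrt n$. Second, $A\mapsto\|A\|_2$ is $1$-Lipschitz with respect to the Frobenius norm, hence a $1$-Lipschitz function of the underlying Gaussian coordinates, so Gaussian concentration of measure yields
\[
\mathbb{P}\bigl(\|A\|_2\ge\sqrt m+\sqrt n+u\bigr)\;\le\;e^{-u^2}\qquad\text{for all }u>0 .
\]

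To close the argument I would check that the budget works out. The hypothesis $m\ge16\delta^{-2}n$ is precisely $\sqrt n\le(\delta/4)\sqrt m=(\epsilon^2+\epsilon)\sqrt m$, so $\sqrt m+\sqrt n+u\le(1+\epsilon^2+\epsilon)\sqrt m+u\le(1+2\epsilon)\sqrt m$ as soon as $u\le(\epsilon-\epsilon^2)\sqrt m$. Taking $u=(\epsilon-\epsilon^2)\sqrt m$ makes the failure probability at most $e^{-(\epsilon-\epsilon^2)^2m}=e^{-\epsilon^2(1-\epsilon)^2m}\le e^{-m\epsilon^2/2}$ (the last step valid once $\epsilon$ is small, certainly whenever $\delta\le1$, which is the regime of interest). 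Combined with the first paragraph this gives $\tfrac1m\|\A(X)\|_1\le(1+\delta)\|X\|_*$ for every $X$ on an event of probability at least $1-e^{-m\epsilon^2/2}$. As an alternative to the Gordon bound, one can discretize the unit sphere of $\C^n$ by an $\eta$-net and apply a Chernoff bound to each $u^*\bigl(\tfrac1m\sum_ia_ia_i^*\bigr)u$, using that $|a_i^*u|^2$ is a mean-one subexponential variable with MGF $(1-s)^{-1}$; this also explains the shape of the exponent, but loses a constant factor and a $\log(1/\delta)$ in the required number of measurements.

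The routine parts are the SVD reduction and the algebra that turns the additive deviation into the multiplicative factor $1+\delta$; the step I expect to be the main obstacle, for a self-contained treatment rather than an invocation of a black box, is the sharp mean bound $\mathbb{E}\|A\|_2\le\sqrt m+\sqrt n$, which needs a Gaussian comparison inequality and is precisely what lets the probability come out as the clean single exponential $e^{-m\epsilon^2/2}$ under only the hypothesis $m\ge16\delta^{-2}n$. A secondary technical point is bookkeeping the complex (rather than real) Gaussian normalization throughout, since it fixes the variance proxy in the concentration bound and hence the constants.
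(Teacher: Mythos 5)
The paper itself offers no proof of this lemma: it is imported verbatim as Lemma~3.1 of \cite{CSV} and used as a black box, so there is no in-text argument to compare against. Your derivation is a legitimate self-contained proof and follows the standard route to this estimate (essentially the one in \cite{CSV}): spectrally decompose $X$, bound $|a_i^*Xa_i|\le\sum_k\sigma_k|a_i^*u_k|\,|a_i^*v_k|$, and reduce the uniform-in-$X$ claim to the single spectral bound $\tfrac1m\|AA^*\|_2\le1+\delta$, which is then classical largest-singular-value concentration for a Gaussian matrix; your use of the full SVD rather than an eigendecomposition even covers arbitrary $X\in\C^{n\times n}$ as the lemma is phrased here, not just Hermitian $X$. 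What the citation buys the paper is the companion lower bound and vetted constants; what your argument buys is transparency about where each hypothesis enters --- $m\ge16\delta^{-2}n$ is exactly $\sqrt n\le(\delta/4)\sqrt m$, and $\delta/4=\epsilon^2+\epsilon$ is exactly $(1+2\epsilon)^2=1+\delta$. Two loose ends remain, both of which you flag but do not close: (i) the final comparison $e^{-\epsilon^2(1-\epsilon)^2m}\le e^{-m\epsilon^2/2}$ requires $\epsilon\le1-1/\sqrt2$, i.e.\ roughly $\delta\lesssim1.5$, whereas the lemma is stated for any $\delta>0$; for larger $\delta$ you would need to rebalance the deviation budget between the $\sqrt n$ term and $u$. (ii) The exponent $e^{-u^2}$ rather than $e^{-u^2/2}$ in the concentration step is precisely what makes (i) work, and it hinges on the complex entries having unit total variance (Lipschitz constant $1/\sqrt2$ in the underlying real Gaussian coordinates) together with $\mathbb{E}\|A\|_2\le\sqrt m+\sqrt n$ holding under that same normalization. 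These must be pinned down for the stated probability $1-e^{-m\epsilon^2/2}$ to come out exactly, but neither is a conceptual gap, and in the regime $\delta\le1$ relevant to the paper the argument is sound.
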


Under assumptions of Lemma \ref{isometry}, with high probability we have
$$
\frac{1}{\sqrt{m}}\|\A(X)\|_2\leq\frac{1}{m}\|\A(X)\|_1\leq(1+\delta)\|X\|_*\leq(1+\delta)\sqrt{n}\|X\|_F,
$$
which implies $\|\A\| = O(\sqrt{mn})$. For the phase retrieval problem to be well-posed, $m=O(n)$ is required; for instance, $m = 4n$ would be sufficient according to \cite{BCE}. Then we expect that $\|\A\|$ is on the order of $n$. This can be validated by a simple numerical experiment whose results are shown in Table \ref{Anorm} below.

\begin{table}[htbp]
\centering
\begin{tabular}{|p{30pt}|p{30pt}p{30pt}p{30pt}p{30pt}p{30pt}p{30pt}|}
\toprule
$n$ & 32 & 64 & 128 & 256 & 512 & 1024\\
\midrule
$\|\A\|$ & 148 & 291 & 577 & 1149 & 2295 & 4584\\
\bottomrule
\end{tabular}
\caption{Fixing $m = 4n$, $\|\A\|$ is nearly linear in $n$, where $\|\A\| = \sqrt{\|A^*A\circ\overline{A^*A}\|_2}$ with $A$ being complex-valued Gaussian matrix. For each $n$, the value of $\|\A\|$ is averaged over 10 independent samples of $A$ using MATLAB.}\label{Anorm}
\end{table}

\section{Algorithms.}\label{alg}
In this section, we consider the computational aspects of the minimization problem (\ref{unl1l2}).
\subsection{Difference of convex functions algorithm.}
The DCA is a descent method without line search developed by Tao and An \cite{TA_97, TA_98}.
It addresses the problem of minimizing a function of the form $f(x) = g(x) - h(x)$ on the space $\R^n$, with $g$, $h$ being lower semicontinuous proper convex functions:
$$
\min_{x\in\R^n} \; f(x)
$$
$g-h$ is called a DC decomposition of $f$, while the convex functions $g$ and $h$ are DC components of $f$.
The DCA involves the construction of two sequences $\{x^k\}$ and $\{y^k\}$, the candidates for optimal solutions of primal
and dual programs respectively.
At the $(k+1)$-th step, we choose a subgradient of $h(x)$ at $x^k$, namely $y^k \in \partial h(x^k)$. We then linearize $h$ at $x^k$,
which permits a convex upper envelope of $f$. More precisely,
$$
f(x) = g(x)-h(x) \leq g(x) - (h(x^k) + \langle y^k, x-x^k \rangle), \; \forall x\in\R^n
$$
with equality at $x = x^k$.

By iteratively computing
\begin{equation*}
\begin{cases}
y^k \in \partial h(x^k),\\
x^{k+1} = \arg\min_{x\in\R^n} g(x) - (h(x^k) + \langle y^k, x-x^k \rangle)
\end{cases}
\end{equation*}
we have
$$
f(x^k) \geq g(x^{k+1}) - (h(x^k) + \langle y^k, x^{k+1}-x^k \rangle)\geq g(x^{k+1})-h(x^{k+1})= f(x^{k+1}).
$$
This generates a monotonically decreasing sequence $\{f(x^k)\}$, leading to its convergence if $f(x)$ is bounded from below.

We can readily apply the DCA to (\ref{unl1l2}), where the objective naturally has the DC decomposition
\be\label{dcdecomp}
\varphi(X) = (\frac{1}{2}\|\A(X) - b\|_2^2 + \lam\Tr(X)) - \lam\|X\|_F.
\ee
Since $\varphi(X)\geq0$ for all $X\succeq0$, the scheme
\begin{equation*}
\begin{cases}
\Delta^k \in \partial \|X^k\|_F,\\
X^{k+1} = \arg\min_{X\in\C^{n\times n}} \; \frac{1}{2}\|\A(X) - b\|_2^2 + \lam\Tr(X)- \lam(\|X^k\|_F + \la \Delta^k, X-X^k \ra) \quad \mbox{s.t.} \quad X\succeq0.
\end{cases}
\end{equation*}
yields a decreasing and convergent sequence $\{\varphi(X^k)\}$.
Note that $\|X\|_F$ is differentiable with gradient $\frac{X}{\|X\|_F}$ at all $X\neq0$ and that $0 \in \partial \|X\|_F$ at $X=0$,
by ignoring constants we iterate
\begin{equation}\label{iter}
X^{k+1} =
\begin{cases}
\arg\min_{X\in\C^{n\times n}} \; \frac{1}{2}\|\A(X) - b\|_2^2 + \lam\Tr(X) \quad \mbox{s.t.} \quad X\succeq0 & \mbox{if}\quad X^k = 0,\\
\arg\min_{X\in\C^{n\times n}} \; \frac{1}{2}\|\A(X) - b\|_2^2 + \lam\langle X, I_n - \frac{X^k}{\|X^k\|_F}\rangle \quad \mbox{s.t.} \quad X\succeq0 & \mbox{otherwise}.\\
\end{cases}
\end{equation}
Since $X^k-X^{k-1}\to0$ as $k\to\infty$ (Proposition \ref{results} (2)), we stop the DCA when
$$
\frac{\|X^k-X^{k-1}\|_F}{\max\{\|X^k\|_F,1\}}<\verb|tol|,
$$
for some given tolerance $\verb|tol|>0$. In practice the above iteration takes only a few steps to convergence.
While the problem (\ref{unl1l2}) is nonconvex, empirical studies have
shown that the DCA usually produces a global minimizer with a good initialization.
In particular, our initialization here is $X^0 = 0$, as suggested by the observations in \cite{YLHX}.
This amounts to employing the (PhaseLift) solution of the regularized
trace-norm minimization problem (\ref{unl1}) as a start.

\subsection{Convergence analysis.}\label{convergence}
We proceed to show that the sequence $\{X^k\}$ is bounded and $X^{k+1}-X^k \to 0$, and limit points of $\{X^
k\}$ are stationary points of (\ref{unl1l2}) satisfying KKT optimality conditions.
Standard convergence results for the general DCA (e.g. Theorem 3.7 of \cite{TA_98}) take advantage of
strong convexity of the DC components. However, the DC components in (\ref{dcdecomp}) only
possess weak convexity as $\ker(\A^{*}\A)$ is generally nontrivial. In this sense, our analysis below is novel.

\begin{lem}\label{coercive}
Suppose $X\succeq0$, $\varphi(X) \to \infty$ as $X\to\infty$.
\end{lem}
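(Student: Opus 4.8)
The plan is to prove coercivity by observing that the fidelity term $\frac12\|\A(X)-b\|_2^2$ is already coercive once we restrict to the cone $\{X\succeq0\}$, and that the penalty term only helps. First I would record the trivial but essential inequality: for $X\succeq0$ one has $\Tr(X)=\|X\|_*\geq\|X\|_F$, so $\Tr(X)-\|X\|_F\geq0$, hence $\varphi(X)\geq\frac12\|\A(X)-b\|_2^2$ on the feasible set. Therefore it suffices to show $\|\A(X)-b\|_2\to\infty$ as $\|X\|_F\to\infty$ with $X\succeq0$.

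The core step is a linear lower bound of the form $\|\A(X)\|_2\geq c\,\|X\|_F$ for all $X\succeq0$, with a constant $c>0$. I would get this by a compactness argument: the ``cone slice'' $K:=\{X\in\C^{n\times n}:X\succeq0,\ \|X\|_F=1\}$ is closed and bounded, hence compact; the map $X\mapsto\|\A(X)\|_2$ is continuous and, by Lemma~\ref{fundmental}(3) (which uses $\rank(A)=n$), strictly positive on $K$ because every $X\in K$ is nonzero. Thus $c:=\min_{X\in K}\|\A(X)\|_2>0$, and homogeneity of $\A$ and of the norms upgrades this to $\|\A(X)\|_2=\|X\|_F\,\|\A(X/\|X\|_F)\|_2\geq c\,\|X\|_F$ for every nonzero $X\succeq0$, and trivially for $X=0$. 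If one prefers an explicit constant, the same bound follows directly: for $X\succeq0$, $\|\A(X)\|_2^2=\sum_i(a_i^*Xa_i)^2\geq\frac1m\big(\sum_i a_i^*Xa_i\big)^2=\frac1m\big(\Tr(AA^*X)\big)^2\geq\frac1m\lambda_{\min}(AA^*)^2\,\Tr(X)^2\geq\frac1m\lambda_{\min}(AA^*)^2\,\|X\|_F^2$, where $\lambda_{\min}(AA^*)>0$ since $\rank(A)=n$, so one may take $c=\lambda_{\min}(AA^*)/\sqrt m$.

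Finally I would combine the pieces: for $X\succeq0$, $\|\A(X)-b\|_2\geq\|\A(X)\|_2-\|b\|_2\geq c\,\|X\|_F-\|b\|_2$, so once $\|X\|_F>\|b\|_2/c$ we obtain $\varphi(X)\geq\frac12\big(c\,\|X\|_F-\|b\|_2\big)^2$, which tends to $+\infty$ as $\|X\|_F\to\infty$. This establishes the lemma.

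The only real subtlety worth flagging is that $\A$ has a nontrivial kernel on $\H^{n\times n}$ in general (indeed the paper notes $\ker(\A^*\A)$ is typically nontrivial), so the fidelity term is \emph{not} coercive on the whole space; coercivity is recovered purely because we are confined to the PSD cone, on which $\A$ is injective by Lemma~\ref{fundmental}(3) and whose scaling structure converts injectivity-on-a-compact-slice into a global linear lower bound. Beyond that observation I do not anticipate any genuine obstacle.
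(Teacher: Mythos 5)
Your proof is correct, and it uses the same essential ingredients as the paper's: the nonnegativity of $\Tr(X)-\|X\|_F$ on the PSD cone reduces everything to the fidelity term, and Lemma~\ref{fundmental}(3) supplies positivity of $\|\A(X)\|_2$ for nonzero $X\succeq0$. The difference is in how the final limit is justified. The paper argues along rays: it fixes a nonzero $X\succeq0$ and shows $\varphi(cX)\geq\frac12\bigl(c\|\A(X)\|_2-\|b\|_2\bigr)^2\to\infty$ as $c\to\infty$. Strictly speaking, divergence along every ray does not by itself yield coercivity --- one needs the lower bound $\|\A(Y)\|_2\geq c>0$ to hold uniformly over the unit slice of the cone, which is exactly the compactness step you make explicit (continuity and strict positivity of $Y\mapsto\|\A(Y)\|_2$ on the compact set $K=\{Y\succeq0:\|Y\|_F=1\}$, then homogeneity). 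So your argument supplies the uniformity that the paper's one-line reduction leaves implicit, and it is the more complete of the two. Your alternative explicit bound $\|\A(X)\|_2\geq \lambda_{\min}(AA^*)\|X\|_F/\sqrt{m}$ via Cauchy--Schwarz and $\Tr(AA^*X)\geq\lambda_{\min}(AA^*)\Tr(X)\geq\lambda_{\min}(AA^*)\|X\|_F$ is also valid under the paper's standing assumption $\rank(A)=n$, and has the advantage of producing a concrete constant rather than one obtained by compactness. No gaps; if anything, your version is the one I would keep.
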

\begin{proof}
It suffices to show that for any fixed nonzero $X\succeq0$, $\varphi(cX)\to\infty$ as $c\to\infty$.
\begin{align*}
\varphi(cX) =  \frac{1}{2}\|c\A(X) - b\|_2^2 +c\lam(\Tr(X) - \|X\|_F)\geq  \frac{1}{2}(c\|\A(X)\|_2-\|b\|_2)^2.
\end{align*}
Since $X\succeq0$ and is nonzero, by Lemma \ref{fundmental} (3), $\|\A(X)\|_2 > 0$. Hence, $c\|\A(X)\|_2-\|b\|_2\to\infty$ as $c\to \infty$, which completes the proof.
\end{proof}

\begin{lem}\label{decrease}
Let $\{X^k\}$ be the sequence generated by the DCA. For all $k\in\N$, we have
 \be\label{lbd}
 \varphi(X^k) - \varphi(X^{k+1})\geq \hf\|\A(X^k-X^{k+1})\|_2^2 + \lam(\|X^{k+1}\|_F- \|X^k\|_F - \la \Delta^k, X^{k+1}-X^k \ra)\geq0,
 \ee
where $\Delta^{k}\in\partial\|X^{k}\|_F$.
\end{lem}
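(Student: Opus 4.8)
The plan is to exploit the two defining properties of the DCA iteration: that $X^{k+1}$ is the exact minimizer of the linearized convex subproblem, and that $\Delta^k$ is a genuine subgradient of $\|\cdot\|_F$ at $X^k$, so the linearization lies below the true function. First I would write $g(X) := \hf\|\A(X)-b\|_2^2 + \lam\Tr(X)$ and $h(X) := \lam\|X\|_F$, so $\varphi = g - h$ and the subproblem in (\ref{iter}) reads $X^{k+1} = \arg\min_{X\succeq 0} \, g(X) - \la \lam\Delta^k, X\ra$ (dropping the constant $\la \lam\Delta^k, X^k\ra - \lam\|X^k\|_F$). The key observation is that $g$ is not merely convex but satisfies the identity
$$
g(X^k) - g(X^{k+1}) = \la \nabla g_0(X^{k+1}), X^k - X^{k+1}\ra + \hf\|\A(X^k - X^{k+1})\|_2^2 + \lam\Tr(X^k - X^{k+1}),
$$
where $g_0(X) = \hf\|\A(X)-b\|_2^2$; this is just the exact second-order Taylor expansion of the quadratic $g_0$ plus the linear trace term. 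More cleanly, for any $X\succeq 0$,
$$
g(X) = g(X^{k+1}) + \la \nabla g(X^{k+1}), X - X^{k+1}\ra + \hf\|\A(X - X^{k+1})\|_2^2,
$$
an exact equality since the nonsmooth part $\lam\Tr$ is linear on the PSD cone.

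Next I would use the first-order optimality (KKT) condition for the convex subproblem at $X^{k+1}$: there is $\Lam^{k+1}\succeq 0$ with $\la X^{k+1}, \Lam^{k+1}\ra = 0$ and $\nabla g(X^{k+1}) - \lam\Delta^k = \Lam^{k+1}$. Plugging $X = X^k$ into the Taylor identity above gives
$$
g(X^k) - g(X^{k+1}) = \la \Lam^{k+1} + \lam\Delta^k, X^k - X^{k+1}\ra + \hf\|\A(X^k - X^{k+1})\|_2^2.
$$
Since $\Lam^{k+1}\succeq 0$, $X^k\succeq 0$, and $\la X^{k+1},\Lam^{k+1}\ra = 0$, Lemma \ref{fundmental}(1) gives $\la \Lam^{k+1}, X^k - X^{k+1}\ra = \la \Lam^{k+1}, X^k\ra \geq 0$. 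Therefore
$$
g(X^k) - g(X^{k+1}) \geq \lam\la \Delta^k, X^k - X^{k+1}\ra + \hf\|\A(X^k - X^{k+1})\|_2^2.
$$
Finally, subtract the $h$ terms: because $\Delta^k \in \partial\|X^k\|_F$, convexity of $\|\cdot\|_F$ yields $\|X^{k+1}\|_F \geq \|X^k\|_F + \la \Delta^k, X^{k+1} - X^k\ra$, i.e. $-h(X^k) + h(X^{k+1}) \geq \lam\la \Delta^k, X^{k+1} - X^k\ra$. Adding this to the displayed inequality and using $\varphi = g - h$ produces
$$
\varphi(X^k) - \varphi(X^{k+1}) \geq \hf\|\A(X^k - X^{k+1})\|_2^2 + \lam\bigl(\|X^{k+1}\|_F - \|X^k\|_F - \la \Delta^k, X^{k+1}-X^k\ra\bigr),
$$
and the parenthesized term is $\geq 0$ again by the subgradient inequality, giving the full chain in (\ref{lbd}).

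The main obstacle I anticipate is handling the case $X^k = 0$, where $\|\cdot\|_F$ is not differentiable and the iteration switches to the first branch of (\ref{iter}); there one takes $\Delta^k = 0 \in \partial\|0\|_F$ and the argument goes through verbatim, since the subgradient inequality $\|X^{k+1}\|_F \geq 0 = \|X^k\|_F + \la 0, X^{k+1} - X^k\ra$ still holds and the subproblem's KKT condition is $\nabla g(X^{k+1}) = \Lam^{k+1}$. A secondary point requiring care is justifying that the convex subproblem actually attains its minimum so that $X^{k+1}$ and the multiplier $\Lam^{k+1}$ exist — this follows from coercivity of the subproblem objective on the PSD cone (same argument as Lemma \ref{coercive}, or more directly since $\lam\Tr(X) - \lam\la\Delta^k,X\ra = \lam\la I_n - \Delta^k, X\ra$ with $I_n - \Delta^k \succeq 0$ when $\|\Delta^k\|_2 \leq 1$). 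Everything else is the routine bookkeeping sketched above.
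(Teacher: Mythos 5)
Your proposal is correct and follows essentially the same route as the paper's proof: both expand the quadratic part of $\varphi$ exactly, pair the KKT stationarity equation of the convex subproblem with $X^k-X^{k+1}$, invoke complementary slackness together with Lemma \ref{fundmental}(1) to drop $\la \Lam^{k+1}, X^k\ra \geq 0$, and finish with the subgradient inequality for $\|\cdot\|_F$. The only difference is presentational (you phrase the expansion as a Taylor identity for the convex component $g$ rather than computing $\varphi(X^k)-\varphi(X^{k+1})$ directly), and your explicit treatment of the $X^k=0$ case with $\Delta^k=0$ is a welcome extra detail.
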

\begin{proof}
We first calculate
\begin{align}\label{k+1_k}
\varphi(X^k) - \varphi(X^{k+1}) = & \hf\|\A(X^k-X^{k+1})\|_2^2 + \la \A(X^k - X^{k+1}),\A(X^{k+1})-b \ra  \notag\\
& + \lam\Tr(X^k-X^{k+1}) + \lam(\|X^{k+1}\|_F-\|X^{k}\|_F).
\end{align}
Recall that the $(k+1)$-th DCA iteration is to solve
$$
X^{k+1} = \arg\min_{X\in\C^{n\times n}} \; \frac{1}{2}\|\A(X) - b\|_2^2 + \lam\langle X, I_n - \Delta^{k}\rangle \quad \mbox{s.t.} \quad X\succeq 0,
$$
where $\Delta^{k}\in\partial\|X^{k}\|_F$. Then by the KKT conditions at $X^{k+1}$, there exists $\Lam^{k+1}$ such that
\begin{eqnarray}\label{optk}
 & \A^*(\A(X^{k+1})-b)+\lam (I_n  - \Delta^{k}) = \Lam^{k+1},\\
 & X^{k+1}\succeq0, \; \Lam^{k+1}\succeq0, \; \la \Lam^{k+1},X^{k+1} \ra = 0.\notag
\end{eqnarray}
Multiplying (\ref{optk}) by $X^k-X^{k+1}$ (inner product) gives
\be\label{ineq4}
\la \A(X^k - X^{k+1}),\A(X^{k+1})-b \ra + \lam\Tr(X^k-X^{k+1})  = \la \Lam^{k+1}, X^{k}\ra - \lam \la \Delta^k, X^{k+1}-X^{k} \ra.
\ee
In (\ref{ineq4}), $\la \Delta^k, X^{k+1}-X^{k} \ra \leq \|X^{k+1}\|_F - \|X^{k}\|_F$ since $\Delta^{k}\in\partial\|X^{k}\|_F$, and
$\la \Lam^{k+1}, X^{k}\ra \geq0$ by Lemma \ref{fundmental} (1). Combining (\ref{k+1_k}) and (\ref{ineq4}) gives
\begin{align*}
\varphi(X^k) - \varphi(X^{k+1}) = & \hf\|\A(X^k-X^{k+1})\|_2^2+ \lam(\|X^{k+1}\|_F-\|X^{k}\|_F)+ \la \Lam^{k+1}, X^{k}\ra \\
& - \lam \la \Delta^k, X^{k+1}-X^{k}\ra\\
\geq & \hf\|\A(X^k-X^{k+1})\|_2^2 + \lam(\|X^{k+1}\|_F- \|X^k\|_F - \la \Delta^k,X^{k+1}-X^k\ra)\\
\geq & 0.
\end{align*}
\end{proof}

We are now in the position to prove convergence results of the DCA for solving the PhaseLiftOff problem (\ref{unl1l2}).
\begin{prop}\label{results}
Let $\{X^k\}$ be the sequence produced by the DCA starting with $X^0 = 0$.
\bi
\item[\textbf{1.}] $\{X^k\}$ is bounded.
\item[\textbf{2.}] $X^{k+1}-X^k\to0$ as $k\to\infty$.
\item[\textbf{3.}] Any nonzero limit point $\tilde{X}$ of the sequence $\{X^k\}$ is a first-order stationary point, which means there exists $\tilde{\Lam}$, such that the following KKT conditions are satisfied:
    \begin{itemize}
    \item[$\bullet$] Stationarity: $\A^*(\A(\tilde{X})-b) + \lam(I_n - \frac{\tilde{X}}{\|\tilde{X}\|_F}) = \tilde{\Lam}$.
    \item[$\bullet$] Primal feasibility: $\tilde{X}\succeq 0$.
    \item[$\bullet$] Dual feasibility: $\tilde{\Lam}\succeq0$.
    \item[$\bullet$] Complementary slackness: $\la \tilde{X}, \tilde{\Lam} \ra = 0$.
    \end{itemize}
\ei
\end{prop}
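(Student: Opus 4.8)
The plan is to prove the three assertions of Proposition \ref{results} in order, leaning on Lemma \ref{coercive} and Lemma \ref{decrease} established above.

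\medskip

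\textbf{Part 1 (boundedness).} From Lemma \ref{decrease} the sequence $\{\varphi(X^k)\}$ is nonincreasing, hence $\varphi(X^k) \leq \varphi(X^0) = \varphi(0) = \hf\|b\|_2^2$ for all $k$. If $\{X^k\}$ were unbounded, there would be a subsequence with $\|X^{k_j}\|_F \to \infty$, and since each $X^k \succeq 0$, Lemma \ref{coercive} would force $\varphi(X^{k_j}) \to \infty$, contradicting the uniform bound. Therefore $\{X^k\}$ is bounded.

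\medskip

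\textbf{Part 2 ($X^{k+1}-X^k \to 0$).} Since $\{\varphi(X^k)\}$ is nonincreasing and bounded below by $0$, it converges; hence $\varphi(X^k) - \varphi(X^{k+1}) \to 0$. Lemma \ref{decrease} gives
$$
\varphi(X^k) - \varphi(X^{k+1}) \geq \hf\|\A(X^k - X^{k+1})\|_2^2 + \lam\big(\|X^{k+1}\|_F - \|X^k\|_F - \la \Delta^k, X^{k+1} - X^k\ra\big) \geq 0,
$$
and both nonnegative terms on the right go to $0$. The first gives $\|\A(X^k - X^{k+1})\|_2 \to 0$. The second term is the Bregman-type gap of the convex function $\|\cdot\|_F$ at $X^{k+1}$ relative to the linearization at $X^k$; I would argue that this gap tending to $0$, combined with boundedness of $\{X^k\}$, forces $X^{k+1} - X^k \to 0$. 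Concretely, pass to a subsequence along which $X^{k} \to \bar X$ and $X^{k+1} \to \bar Y$; then $\|\A(\bar X - \bar Y)\|_2 = 0$, so by Lemma \ref{fundmental} (3) applied to $\bar X - \bar Y$ (note $\bar X - \bar Y$ need not be PSD, so a little care is needed — one may instead use that $\A^*\A$ restricted to the relevant subspace is injective, or exploit that the Bregman gap of the Euclidean norm vanishing implies the points are aligned, together with $\|\A(\bar X - \bar Y)\|_2 = 0$). The anticipated obstacle is precisely this step: the DC component $\|\cdot\|_F$ is only weakly convex on the feasible cone and $\ker(\A^*\A)$ is nontrivial, so neither term alone controls $X^{k+1} - X^k$; one must combine them. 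I would handle it by noting that if $\bar X \neq 0$ then $\|\cdot\|_F$ is strictly convex near $\bar X$ so the vanishing Bregman gap forces $\bar Y = \bar X$ directly; and if $\bar X = 0$ one uses that $\|\A(X^{k+1})\|_2 \to 0$ together with Lemma \ref{fundmental} (3) and $X^{k+1} \succeq 0$ to get $X^{k+1} \to 0 = \bar X$. Since every subsequential limit of $X^{k+1} - X^k$ is $0$ and the sequence is bounded, $X^{k+1} - X^k \to 0$.

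\medskip

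\textbf{Part 3 (limit points are KKT points).} Let $\tilde X \neq 0$ be a limit point, say $X^{k_j} \to \tilde X$. By Part 2, $X^{k_j + 1} \to \tilde X$ as well, and since $\tilde X \neq 0$, for large $j$ we have $X^{k_j} \neq 0$, so $\Delta^{k_j} = X^{k_j}/\|X^{k_j}\|_F \to \tilde X/\|\tilde X\|_F$. The KKT system (\ref{optk}) at $X^{k_j+1}$ reads
$$
\A^*(\A(X^{k_j+1}) - b) + \lam\big(I_n - \Delta^{k_j}\big) = \Lam^{k_j+1}, \quad X^{k_j+1} \succeq 0,\ \Lam^{k_j+1} \succeq 0,\ \la \Lam^{k_j+1}, X^{k_j+1}\ra = 0.
$$
Define $\tilde\Lam := \A^*(\A(\tilde X) - b) + \lam(I_n - \tilde X/\|\tilde X\|_F)$; then $\Lam^{k_j+1} \to \tilde\Lam$ by continuity. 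Passing to the limit: $\tilde X \succeq 0$ (closedness of the PSD cone), $\tilde\Lam \succeq 0$ (likewise), and $\la \tilde\Lam, \tilde X\ra = \lim \la \Lam^{k_j+1}, X^{k_j+1}\ra = 0$. The stationarity equation holds by the definition of $\tilde\Lam$. This is exactly the asserted KKT system, completing the proof.
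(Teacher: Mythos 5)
Your Parts 1 and 3 follow the paper's argument and are essentially correct. The problem is in Part 2, at precisely the step you identify as the crux, and the resolution you commit to does not work. You claim that if $\bar X\neq 0$ then $\|\cdot\|_F$ is strictly convex near $\bar X$, so the vanishing Bregman gap forces $\bar Y=\bar X$ directly. This is false: a norm is positively homogeneous, hence affine along every ray through the origin, and the gap in question equals $\|Y\|_F-\|X\|_F-\la X/\|X\|_F,\,Y-X\ra=\|Y\|_F-\la X/\|X\|_F,\,Y\ra$, which vanishes for \emph{every} $Y=cX$ with $c\geq 0$. So the vanishing gap only pins $\bar Y$ down to the ray through $\bar X$, never to the point $\bar X$ itself, and the ``direct'' route collapses no matter how close to $\bar X$ you work.

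The correct fix is the combination you mention only in passing, and it is exactly what the paper does. Set $c^k=\la X^k,X^{k+1}\ra/\|X^k\|_F^2$ and $E^k=X^{k+1}-c^kX^k$; the vanishing gap gives $\|E^k\|_F\to 0$ (the Cauchy--Schwarz equality case), i.e.\ asymptotic alignment. Then $\|\A(X^k-X^{k+1})\|_2\to 0$ reduces to $|c^k-1|\,\|\A(X^k)\|_2\to 0$, and since $(1-c^k)X^k$ is a scalar multiple of a PSD matrix, Lemma \ref{fundmental} (3) becomes usable: if $c^k\not\to1$ one gets a subsequence with $\|\A(X^{k_j})\|_2\to0$, hence $X^{k_j}\to0$ and $\varphi(X^{k_j})\to\varphi(0)$, contradicting $\varphi(X^k)\leq\varphi(X^1)<\varphi(0)$. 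This last inequality relies on the dichotomy (which your write-up also omits) that either $X^k\equiv 0$ for all $k$ or $X^k\neq0$ for all $k\geq1$, obtained by applying (\ref{lbd}) at $k=0$ with $\Delta^0=0$. With that, $c^k\to1$ and $X^{k+1}-X^k=(c^k-1)X^k+E^k\to0$. So your skeleton and your diagnosis of the obstacle are right, but the step needs alignment plus injectivity of $\A$ on the PSD cone, not local strict convexity of the Frobenius norm.
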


\begin{proof}
\textbf{(1)} By Lemma \ref{coercive}, the level set $\Omega := \{X\in \C^{n\times n}: X\succeq0, \varphi(X)\leq \varphi(0)\}$ is bounded. Since $\{\varphi(X^k)\}$ is decreasing, $\{X^k\}\subseteq\Omega$ is also bounded.

\textbf{(2)} Letting $k=0$ and substituting $\Delta^0 = 0$ in (\ref{lbd}), we obtain
$$
\varphi(0) - \varphi(X^{1})\geq \hf\|\A(X^1)\|_2^2 + \lam\|X^1\|_F.
$$
If $X^1\neq0$, then $\varphi(0) > \varphi(X^{1})\geq\cdots\geq\varphi(X^{k})$, so $X^k \neq 0$, $\forall k\geq1$. Otherwise $X^k \equiv 0$.

Assuming $X^k \neq 0$, we show that $X^{k+1}-X^k\to0$ as $k\to\infty$ in what follows.
Note that $\{\varphi(X^k)\}$ is decreasing and convergent, and that $\Delta^k = \frac{X^k}{\|X^{k}\|_F}$ when $k\geq1$. Combining this with (\ref{lbd}), we have the following key information about $\{X^k\}$:
\begin{align}
& \|\A(X^k-X^{k+1})\|_2\to0\label{lim1}\\
& \|X^{k+1}\|_F - \la \frac{X^k}{\|X^{k}\|_F},X^{k+1} \ra\to 0. \label{lim2}
\end{align}
Define $c^k := \frac{\la X^k,X^{k+1}\ra}{\|X^k\|_F^2}\geq0$ and $E^k := X^{k+1}-c^kX^k$, then it suffices to prove $E^k\to0$ and $c^k\to1$. A simple computation shows
\begin{align*}
\|E^k\|_F^2 = \|X^{k+1}\|_F^2 - \frac{\la X^k,X^{k+1} \ra^2}{\|X^k\|_F^2}\to 0,
\end{align*}
where ($\ref{lim2}$) was used. Thus, from (\ref{lim1}) it follows that
$$
0 = \lim_{k\to\infty}\|\A(X^{k}-X^{k+1})\|_2 = \lim_{k\to\infty}\|\A((c^k-1)X^k - E^k)\|_2 = \lim_{k\to\infty}|c^k-1|\|\A(X^k)\|_2.
$$
Suppose $\lim_{k\to\infty}c^k\neq1$, then there exists a subsequence $\{X^{k_j}\}$ such that $\|\A(X^{k_j})\|_2\to0$. Since, by Lemma \ref{fundmental} (3), $\A(X)=0\Leftrightarrow X=0$ for $X\succeq0$, we must have $X^{k_j}\to0$ and $\varphi(X^{k_j})\to\varphi(0)$, which leads to a
contradiction because
$$
\varphi(X^{k_j})\leq\varphi(X^1)<\varphi(0).
$$
Therefore $c^k\to1$ and $X^{k+1}-X^{k}\to0$, as $k\to\infty$.

\textbf{(3)} Let $\{X^{k_j}\}$ be a subsequence of $\{X^{k}\}$ converging to some limit point $\tilde{X}\neq0$, then the optimality conditions at the $k_j$-th step read:
\begin{align*}
& \A^*(\A(X^{k_j})-b)+\lam (I_n  - \frac{X^{k_j-1}}{\|X^{k_j-1}\|_F}) = \Lam^{k_j}, \\
& X^{k_j}\succeq0, \;  \Lam^{k_j}\succeq0, \; \la \Lam^{k_j},X^{k_j} \ra = 0.
\end{align*}
Define
\begin{align*}
\tilde{\Lam}:= & \lim_{k_j\to\infty} \Lam^{k_j} \\
= & \lim_{k_j\to\infty} \A^*(\A(X^{k_j})-b)+\lam (I_n  - \frac{X^{k_j-1}}{\|X^{k_j-1}\|_F}) \\
= & \lim_{k_j\to\infty} \A^*(\A(X^{k_j})-b)+\lam (I_n  - \frac{X^{k_j}}{\|X^{k_j}\|_F}) + \lam(\frac{X^{k_j}}{\|X^{k_j}\|_F}-\frac{X^{k_j-1}}{\|X^{k_j-1}\|_F}) \\
= & \A^*(\A(\tilde{X})-b)+\lam (I_n  - \frac{\tilde{X}}{\|\tilde{X}\|_F}).
\end{align*}
In the last equality, we used $\lim_{k_j\to\infty}X^{k_j} = \tilde{X}\neq0$ and $X^{k_j}-X^{k_j-1}\to0$. Since $X^{k_j}\succeq0$, $\Lam^{k_j}\succeq0$, their limits are $\tilde{X}\succeq0$ and $\tilde{\Lam}\succeq0$. It remains to check that $\la \tilde{\Lam},\tilde{X} \ra = 0$.
Using $\la \Lam^{k_j},X^{k_j} \ra = 0$, we have
\begin{align*}
\la \tilde{\Lam},\tilde{X} \ra= \la \tilde{\Lam}-\Lam^{k_j},\tilde{X} - X^{k_j}\ra + \la \Lam^{k_j},\tilde{X}\ra + \la \tilde{\Lam},X^{k_j}\ra.
\end{align*}
Let $k_j\to\infty$ on the right hand side above, $\la \tilde{\Lam},\tilde{X} \ra = 0$.
\end{proof}

\begin{rmk}
 In light of the proof of Proposition \ref{results} (2), one can see that for all $k\geq1$, either $X^k\equiv0$ or $\|X^k\|_F>\eta$ for some $\eta>0$. A sufficient condition to ensure that the DCA does not yield $\tilde{X}=0$ is as follows
$$
\hf\|b\|_2^2 > \hf\|e\|_2^2 + \lam\Tr(\hat{X}) \Leftrightarrow \lam < \frac{\|b\|_2^2-\|e\|_2^2}{2\Tr(\hX)},
$$
where $\hX$ is the ground truth obeying $b = \A(\hX)+e$. The above condition would guarantee that $X^1\neq0$.
Though the equivalence between (\ref{energymin})
and (\ref{unl1l2}) follows from Theorem \ref{equivalence} as long as $\lam$ is sufficiently large,
in practice $\lam$ cannot get too large because the DCA iterations may stall at $X^0=0$.
\end{rmk}

\subsection{Solving the subproblem.}
At the $(k+1)$-th DC iteration, one needs to solve a convex subproblem of the form:
\begin{equation}\label{SDP}
X^{k+1} = \arg\min_{X\in\C^{n\times n}} \; \hf\|\A(X) - b\|_2^2 + \la X,W\ra \quad \mbox{s.t.} \quad X \succeq 0.
\end{equation}
In our case, $W = \lam I_n$ or $\lam(I_n - \frac{X^k}{\|X^k\|_F})$ is a known Hermitian matrix.

This problem can be treated as a weighted trace-norm regularization problem, which has been
studied in \cite{CESV}. The authors of \cite{CESV} suggest using FISTA \cite{FISTA,CESV} which is a variant of Nesterov's accelerated
gradient descent method \cite{Nesterov}. An alternative choice is the alternating direction method of multipliers (ADMM).
We only discuss the naive ADMM here, though this algorithm could be further accelerated
by incorporating Nesterov's idea \cite{fastADMM}.
To implement ADMM, we introduce a dual variable $Y$ and form the augmented Lagrangian
\begin{equation}\label{lag}
\mathcal{L}_{\del}(X,Y,Z) = \hf\|\A(X) - b\|_2^2 + \la X,W\ra + \la Y,X-Z \ra + \frac{\del}{2}\|X-Z\|_F^2 + g_{\succeq}(Z),
\end{equation}
where
\begin{equation*}
g_{\succeq}(Z) =
\begin{cases}
0 & \mbox{if} \quad Z\succeq0,\\
\infty & \mbox{otherwise}.
\end{cases}
\end{equation*}
ADMM consists of updates on both the primal and dual variables \cite{ADMM}:
\begin{equation*}
\begin{cases}
 X^{l+1} =  \arg\min_X \; \mathcal{L}_{\del}(X,Y^l,Z^l)\\
 Z^{l+1} =  \arg\min_Z \; \mathcal{L}_{\del}(X^{l+1},Y^l,Z)\\
 Y^{l+1} = Y^l + \del(X^{l+1} - Z^{l+1})
\end{cases}
\end{equation*}
The first two steps have closed-form solutions, which are detailed in Algorithm \ref{ADMM}.
\begin{algorithm}
\caption{ADMM for solving (\ref{SDP})}
\begin{algorithmic}\label{ADMM}
\WHILE {not converged}
\STATE $X^{l+1} = (\A^*\A + \del \I_n)^{-1}(\A^*(b)- W + \del Z^l -Y^l)$
\STATE $Z^{l+1} = \P_{\succeq}(X^{l+1} + Y^l/\del)$
\STATE $Y^{l+1} = Y^l + \del(X^{l+1} - Z^{l+1})$
\ENDWHILE
\end{algorithmic}
\end{algorithm}
In the $X$-update step, one needs to know the expression of $(\A^*\A + \del \I_n)^{-1}$.
The celebrated Woodbury formula implies
$$
(\A^*\A + \del \I_n)^{-1} = \frac{1}{\del}(\I_n - \A^*(\A\A^*+\del\I_m)^{-1}\A).
$$
By Lemma \ref{inv}, $\A\A^* = A^*A\circ\overline{A^*A}$, so we have
\begin{align*}
(\A^*\A + \del \I_n)^{-1}(X) = & \frac{1}{\del}(X - \A^*((\A\A^*+\del\I_m)^{-1}\A(X))\\
= & \frac{1}{\del}(X - A\Diag((A^*A\circ\overline{A^*A} + \del I_m)^{-1}\diag(A^*XA))A^*)
\end{align*}
In the $Z$-update step, $\P_{\succeq}:\H^{n\times n} \rightarrow \H^{n\times n}$ represents the projection onto the positive semidefinite cone. More precisely, if $X$ has the eigenvalue decomposition
$X = U\Sig U^*$, then
$$
\P_{\succeq}(X) = U\max\{\Sig,0\}U^*.
$$
According to \cite{ADMM}, the stopping criterion here is given by:
$$
\|R^l\|_F\leq n\epsilon^{\mathrm{abs}}+\epsilon^{\mathrm{rel}}\max\{\|X^l\|_F,\|Z^l\|_F\}, \quad
\|S^l\|_F\leq n\epsilon^{\mathrm{abs}}+\epsilon^{\mathrm{rel}}\|Y^l\|_F,
$$
where $R^l= X^l-Z^l$, $S^l = \delta(Z^l-Z^{l-1})$ are primal and dual residuals respectively at the $l$-th iteration. $\epsilon^{\mathrm{abs}}>0$ is an absolute tolerance and $\epsilon^{\mathrm{rel}}>0$ is a relative tolerance, and they are both algorithm parameters.
$\del$ is typically fixed, but one can also adaptively update it during iterations following the rule in \cite{ADMM}; for instance,
\begin{equation*}
\del^{l+1} =
\begin{cases}
2\del^l & \mbox{if} \quad \|R^l\|_F>10\|S^l\|_F,\\
\del^l/2 & \mbox{if} \quad 10\|R^l\|_F<\|S^l\|_F,\\
\del^l & \mbox{otherwise.}
\end{cases}
\end{equation*}

\subsection{Real-valued, nonnegative signals.}
If the signal is known to be real or nonnegative, we should add one more constraint to the complex PhaseLiftOff (\ref{unl1l2}):
\begin{equation}\label{unl1l2R}
\min_{X\in\C^{n\times n}} \; \varphi(X) \quad \mbox{s.t.} \quad X\succeq 0, \; X\in\Omega.
\end{equation}
Here $\Omega$ is $\R^{n\times n}$ (or resp., $\R^{n\times n}_{+}$), which means each entry of $X$ is real (or resp., nonnegative). Thus we need to modify the DCA (\ref{iter}) accordingly:
\begin{equation}\label{iterR}
X^{k+1} =
\begin{cases}
\arg\min_{X\in\C^{n\times n}} \; \frac{1}{2}\|\A(X) - b\|_2^2 + \lam\Tr(X) \quad \mbox{s.t.} \quad X\succeq0, \; X\in\Omega & \mbox{if}\quad X^k = 0,\\
\arg\min_{X\in\C^{n\times n}} \; \frac{1}{2}\|\A(X) - b\|_2^2 + \lam\langle X, I_n - \frac{X^k}{\|X^k\|_F}\rangle \quad \mbox{s.t.} \quad X\succeq0, \; X\in\Omega & \mbox{otherwise}.\\
\end{cases}
\end{equation}
The above subproblem at each DCA iteration can also be solved by ADMM. Specifically, we want to solve the optimization problem of the following form:
\begin{equation}\label{sub}
\min_{X\in\C^{n\times n}} \; \hf\|\A(X) - b\|_2^2 + \la X,W\ra \quad \mbox{s.t.} \quad X \succeq 0, \; X\in\Omega.
\end{equation}
In ADMM form, (\ref{sub}) is reformulated as
$$
\min_{X\in\C^{n\times n}} \; \hf\|\A(X) - b\|_2^2 + \la X,W\ra  + g_{\Omega}(X) + g_{\succeq}(Z) \quad \mbox{s.t.} \quad X - Z = 0,
$$
where $g_{\succeq}(Z)$ is the same as in (\ref{lag}), and
\begin{equation*}
g_{\Omega}(X) =
\begin{cases}
0 & \mbox{if} \quad X\in\Omega,\\
\infty & \mbox{otherwise}.
\end{cases}
\end{equation*}
Having defined the augmented Lagrangian
$$
\mathcal{L}_{\del}(X,Y,Z) = \hf\|\A(X) - b\|_2^2 + \la X,W\ra + \la Y,X-Z \ra + \frac{\del}{2}\|X-Z\|_F^2 + g_{\Omega}(X) + g_{\succeq}(Z),
$$
we arrive at Algorithm \ref{ADMM1} by alternately minimizing $\mathcal{L}_{\del}$ with respect to $X$, minimizing with respect to $Z$, and updating the dual variable $Y$.
\begin{algorithm}
\caption{ADMM for solving (\ref{sub})}
\begin{algorithmic}\label{ADMM1}
\WHILE {not converged}
\STATE $X^{l+1} = \P_{\Omega}((\A^*\A + \del \I_n)^{-1}(\A^*(b)- W + \del Z^l -Y^l))$
\STATE $Z^{l+1} = \P_{\succeq}(X^{l+1} + Y^l/\del)$
\STATE $Y^{l+1} = Y^l + \del(X^{l+1} - Z^{l+1})$
\ENDWHILE
\end{algorithmic}
\end{algorithm}
The operator $\P_{\Omega}:\H^{n\times n} \rightarrow \Omega$ in Algorithm \ref{ADMM1} represents the projection onto the set $\Omega$. In particular, $\P_{\Omega}(X) = \mathrm{Re}(X)$ is the real part of $X$ for $\Omega = \R^{n\times n}$, whereas $\P_{\Omega}(X) = \max\{\mathrm{Re}(X),0\}$ for $\Omega = \R_{+}^{n\times n}$.
Algorithm \ref{ADMM1} is almost identical to Algorithm \ref{ADMM} except that an extra projection $\P_{\Omega}$ is performed in the $X$-update step.

\section{Numerical Experiments.}\label{numerical}
In this section, we report numerical results. Besides the proposed (\ref{unl1l2}) and the regularized PhaseLift (\ref{unl1}), we also discuss the following reweighting scheme from \cite{CESV}, which is an extension of reweighted $\ell_1$ algorithm in the regime of compressed sensing introduced in \cite{rwl1}:
\be\label{rwsub}
X^{k+1} = \arg\min_{X\in\C^{n\times n}} \; \hf\|\A(X) - b\|_2^2 + \lam\la W^k, X\ra \quad \mbox{s.t.} \quad X \succeq 0,
\ee
where $W^0 = I_n$ and $W^k = (X^k+\varepsilon I_n)^{-1}$ for $k\geq1$ and for some $\varepsilon>0$. The aim of this scheme is to provide more accurate solutions with lower rank than that of PhaseLift.
Note that $W^k$ is exactly the gradient of $\log(\det(X+\varepsilon I_n))$ at $X^k$, the reweighting scheme is in essence an implementation of the DCA attempting to solve the nonconvex problem
\be\label{reweighted}
\min_{X\in\C^{n\times n}} \; \hf\|\A(X) - b\|_2^2 + \lam\log(\det(X+\varepsilon I_n)) \quad \mbox{s.t.} \quad X \succeq 0.
\ee
Here the DC components are $\hf\|\A(X) - b\|_2^2$ and $-\lam\log(\det(X+\varepsilon I_n))$. We hereby remark that with positive semidefinite constraint, the DCA is basically equivalent to the reweighting scheme. In \cite{CESV}, (\ref{unl1}) and the subproblem (\ref{rwsub}) of (\ref{reweighted}) are solved by FISTA.
Here we solve them using ADMM (Algorithm \ref{ADMM}) instead as we find it more efficient.

\subsection{Exact recovery from noise-free measurements.}
We set up a phase retrieval problem by 1) generating a random complex-valued
signal $\hat{x}$ of length $n=32$ whose real and imaginary parts are Gaussian, 2) sampling a Gaussian
matrix $A\in\C^{n\times m}$ with $m = 60, 62, \dots, 150$, and 3)
computing the measurements $b = \A(\hat{x}\hat{x}^*)$.  We then solve (\ref{unl1l2}), (\ref{unl1}) and (\ref{reweighted}) to get approximations to $\hat{x}\hat{x}^*$. The ultimate goal of phase retrieval is to reconstruct the signal $\hat{x}$ rather than the rank-1 matrix $\hat{x}\hat{x}^*$. So given a solution $\tX$, we need to compute the relative mean squared error (rel. MSE) between $\tx =\sqrt{\sigma_1(\tX)}u_1$ and $\hat{x}$ modulo a global phase term to measure the recovery quality, where $\sigma_1(\tX)$ is the largest singular value (or eigenvalue) of $\tX$ and $u_1$ the corresponding unit-normed eigenvector. More precisely, the rel. MSE is given by
$$
\min_{c\in\C:|c| = 1}\frac{\|c\tx - \hat{x}\|_2^2}{\|\hat{x}\|_2^2}.
$$
It is easy to show that its minimum occurs at
$$\tilde{c} = \frac{\la \tx,\hat{x} \ra}{|\la \tx,\hat{x} \ra|}.$$
A recovery is considered as a success if the rel. MSE is less than $10^{-6}$ (or equivalently, relative error $< 10^{-3}$). For each $m = 60, 63, \dots, 150$, we repeat the above procedures 100 times and record the success rate for each model.

For (\ref{unl1}), we set $\lam = 10^{-4}$, $\epsilon^{\mathrm{rel}}=10^{-5}$ and $\epsilon^{\mathrm{abs}}= 10^{-7}$ in its ADMM algorithm; for (\ref{unl1l2}), $\lam = 10^{-4}$, $\epsilon^{\mathrm{rel}}=10^{-5}$, $\epsilon^{\mathrm{abs}}= 10^{-7}$ and $\verb|tol| = 10^{-2}$; parameters for (\ref{reweighted}) are the same as those for (\ref{unl1l2}) except that there is an additional parameter $\varepsilon=2$.
In addition, the maximum iteration set for all ADMM algorithms is 5000 and that for the DCA and the reweighting algorithm are both 10. All three methods start with the same initial point $X^0 = 0$.

The success rate v.s. number of measurements plot is shown in Figure \ref{fig:success}.
The result validates that nonconvex proxy for the rank functional gives significantly
better recovery quality than the convex trace norm. A similar finding has been reported
in the regime of compressed sensing \cite{YLHX}.
We also observe that PhaseLiftOff outperforms $\log$-$\det$ regularization.
This is not surprising as the former always captures rank-1 solutions.
In Figure \ref{fig:success}, one can see that when the number of measurements
is $m\approx3n=96$, solving our model by the DCA guarantees exact recovery with high probability.
Recall that in theory \cite{BCE} at least $3n-2$ measurements are needed to recover the signal exactly.
This is an indication that the proposed method is likely to provide the optimal
practical results one can hope for.

\begin{figure}\centering
\includegraphics[width=0.80\textwidth,height = 0.40\textwidth]{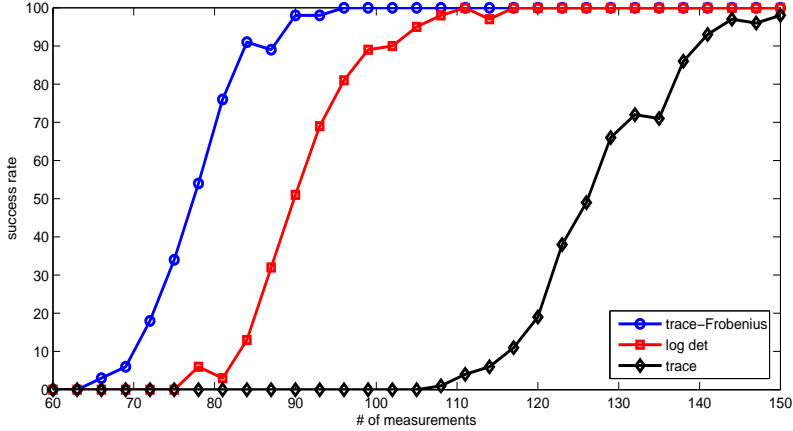}
\caption{success rate v.s. number of measurements with parameters $n = 32$, $m = 60, 63, \dots, 150$, and
100 runs at each $m$.}\label{fig:success}
\end{figure}

\subsection{Robust recovery from noisy measurements.}\label{noisycase}
We investigate how the proposed method performs in the presence of
noise. The test signal $\hat{x}$ is a Gaussian complex-valued signal of length $n = 32$.
We sample $m = 4n$ Gaussian measurement vectors in $\C^n$ and compute the
measurements $b\in\R^m$, followed by adding additive white Gaussian noise
by means of the MATLAB function $\verb|awgn(b,snr)|$. There are 6 noise levels
varying from 5dB to 55dB. We then apply the DCA to achieve a reconstruction $\tX$ and
compute the signal-to-noise ratio (SNR) of reconstruction in dB defined as $-10\log_{10}(\mbox{rel. MSE}).$
The SNR of reconstruction for each noise level is finally averaged over 10 independent runs.

A crucial point to address here is how we set the value of $\lam$. Theorem \ref{equivalence} predicts that
provided the noise amount $\|e\|_2$ is known,
when $\lam > \frac{\|\A\|\|e\|_2}{\sqrt{2}-1}\approx 2.414\|\A\|\|e\|_2$,
the PhaseLiftOff (\ref{unl1l2}) is equivalent to the phase retrieval problem (\ref{energymin}),
and its solution is no longer related to $\lam$. From computational perspective, however, $\lam$ cannot be
too large as the algorithm may often get stuck at a local solution.
An extreme example is that if $\lam$ is exceedingly large,
the DCA will be trapped at the initial guess $X^0 = 0$.
On the other hand, if $\lam$ is too small, the reconstruction will be of course far
from the ground truth as $\A(X)=b$ tends to be enforced.
But can we choose $\lam$ that is less than $2.414\|\A\|\|e\|_2$? The answer is yes, since this bound only provides a sufficient condition for equivalence.

Suppose the noise amount $\|e\|_2$ (or its estimate) is known, defining
$$
\mu := \|\A\|\|e\|_2 = \sqrt{\|A^*A\circ\overline{A^*A}\|_2}\|e\|_2,
$$
we try 4 different values of  $\lam$ in each single run. They are multiples of $\mu$, namely $0.01\mu$, $0.2\mu$, $2.5\mu$ and $50\mu$. The maximum outer and inner iterations are 10 and 5000 respectively. The other parameters are $\epsilon^{\mathrm{rel}}=10^{-5}$, $\epsilon^{\mathrm{abs}}= 10^{-7}$, $\verb|tol| = 10^{-2}$.
The reconstruction results are depicted in Figure \ref{fig:SNR}.
The two curves for $0.2\mu$ and $2.5\mu$ nearly coincide, and they are almost linear, which
strongly suggest stable recoveries. In contrast, the algorithm with $0.01\mu$ and $50\mu$ performed poorly.
Although $\lam = 50\mu$ yields comparable reconstruction when there is little noise,
the DCA clearly encounters local minima in the low SNR regime. On the other hand, $0.01\mu$ is too small.
Summarizing these observations, we conclude that for the DCA method, a reasonable value for $\lam$ lies
in the interval (but not limited to) [$0.2\mu$,$2.5\mu$].

\begin{figure}\centering
\includegraphics[width=0.80\textwidth,height = 0.40\textwidth]{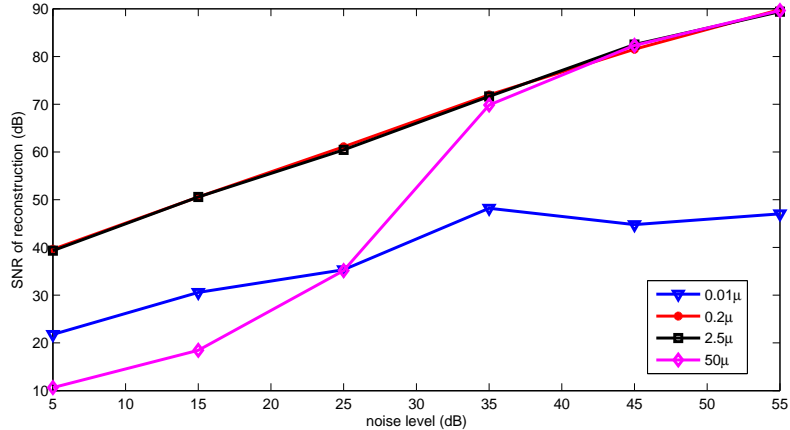}
\caption{SNR of signal recovery v.s. noise level in measurement (in SNR dB).
Parameters are: $n = 32$, noise level = 5dB, 15dB, $\dots$, 55dB;
$\lam = 0.01\mu, 0.2\mu, 2.5\mu, 50\mu$; with 10 runs at each noise level.}\label{fig:SNR}
\end{figure}

\section{Conclusions.}
We introduced and analyzed a novel penalty (trace minus Frobenius norm) for phase retrieval
in the PhaseLiftOff least squares regularization problem.
We proved its equivalence with rank-1 least squares
and stable recovery for noisy measurement at high probability.
The DC algorithm for energy minimization is proved to converge to
a stationary point satisfying KKT conditions without imposing strict convexity of convex components of the energy
(a step beyond the standard DCA theory \cite{TA_97}). Numerical experiments showed that the PhaseLiftOff method outperforms
PhaseLift and its nonconvex variant ($\log$-$\det$ regularization). The minimal number of measurements for exact
recovery by PhaseLiftOff approaches the theoretical limit. In future work, we shall further explore the potential
of PhaseLiftOff in phase retrieval applications and rank-1 optimization problems. We are also interested in developing faster optimization algorithms that are more robust to the value of $\lam$.

\medskip

{\bf Acknowledgements.} The work was partially supported by NSF grant DMS-1222507.
We thank the March 2014 NSF Algorithm Workshop in Boulder, CO, and Dr. E. Esser for communicating
recent developments in phase retrieval research.

\medskip

\end{document}